\documentclass[a4paper,11pt]{amsart}
\usepackage{amssymb}
\usepackage{amsmath}
\usepackage{amsthm}
\usepackage{verbatim}
\usepackage{url}
\usepackage[all]{xy}

\parskip=3pt minus1pt




\theoremstyle{plain}

\newtheorem{theorem}{Theorem}[section]

\newtheorem{proposition}[theorem]{Proposition}
\newtheorem{lemma}[theorem]{Lemma}
\newtheorem{corollary}[theorem]{Corollary}

\newtheorem*{property*}{}

\theoremstyle{definition}

\newtheorem{subsec}[theorem]{}

\theoremstyle{remark}
\newtheorem*{notation*}{Notation}
\newtheorem{remark}[theorem]{Remark}

\newcommand{\RR}{{\mathbb{R}}}
\newcommand{\R}{\RR}

\newcommand{\CC}{{\mathbb{C}}}
\newcommand{\C}{\CC}

\newcommand{\QQ}{{\mathbb{Q}}}
\newcommand{\Q}{\QQ}

\newcommand{\ZZ}{{\mathbb{Z}}}
\newcommand{\Z}{\ZZ}

\newcommand{\GG}{{\mathbb{G}}}
\newcommand{\G}{\GG}

\newcommand{\kbar}{{\bar k}}

\newcommand{\Gbar}{{\overline{G}}}
\newcommand{\Qbar}{{\overline{\Q}}}

\newcommand{\GL}{{\rm GL}}

\newcommand{\gGer}{{\mathfrak{g}}}
\newcommand{\hh}{{\mathfrak{h}}}
\newcommand{\Lie}{{\rm Lie\,}}
\newcommand{\Ad}{{\rm Ad}}

\newcommand{\Hbar}{\overline{H}}

\newcommand{\sT}{{\mathcal{T}}}
\newcommand{\Gal}{{\mathrm{Gal}}}
\newcommand {\Res}{{\mathrm{Res}}}

\newcommand{\X}{{{\sf X}}}

\newcommand{\into}{\hookrightarrow}
\newcommand{\isoto}{\overset{\sim}{\to}}

\newcommand{\labelto}[1]{\xrightarrow{\makebox[1.5em]{\scriptsize ${#1}$}}}

\newcommand{\emm}{\bfseries}
\newcommand{\hs}{\kern 0.8pt}

\newcommand{\red}{{\rm red}}

\newcommand{\loc}{{\rm loc}}

\newcommand{\sN}{{\mathcal N}}

\newcommand{\lra}{{\,\longrightarrow\,\,}}

\newcommand{\sH}{{\mathcal{H}}}

\title[Conjugation of semisimple subgroups]
{Conjugation of semisimple subgroups\\ over real number fields of bounded degree}

\author{Mikhail Borovoi}
\address{Borovoi: Raymond and Beverly Sackler School of Mathematical Sciences,
Tel Aviv University, 6997801 Tel Aviv, Israel}
\email{borovoi@tauex.tau.ac.il}
\thanks{Borovoi was partially supported by the Hermann Min\-kow\-ski Center for Geometry
and by the Israel Science Foundation, grant No. 870/16.}

\author{Christopher Daw}
\address{Daw: Department of Mathematics and Statistics, University of Reading,
    White\-knights,  PO Box 217,  Reading,  Berkshire RG6 6AH,  United Kingdom}
\email{chris.daw@reading.ac.uk}
\thanks{Daw is grateful to the University of Reading for financial support}

\author{Jinbo Ren}
\address{ Ren:
  Institut des Hautes \'Etudes Scientifiques, Le Bois-Marie 35, route de Chartres, 91440 Bures-sur-Yvette, France}
\curraddr{Department of Mathematics, University of Virginia, Charlottesville, VA 22904-4137, USA}
\email{jr9zb@virginia.edu}
\thanks{Ren was partially supported by grants from R\'egion \^Ile de France}




\subjclass[2010]{%
11E72, 
20G30
}

\keywords{Semisimple subgroup, Galois cohomology, real number field}

\begin{document}

\begin{abstract}
Let $G$ be a linear algebraic group over a field $k$ of characteristic 0.
We show that any two connected semisimple $k$-subgroups of $G$
that are conjugate over an algebraic closure of $k$
are actually conjugate over a finite field extension of $k$
of degree bounded independently of the subgroups.
Moreover, if $k$ is a {\em real number field},
we show that any two connected semisimple $k$-subgroups of $G$
that are conjugate over the field of real numbers $\R$ are actually conjugate
over a finite {\em real} extension of $k$ of degree
bounded independently of the subgroups.
\end{abstract}

\maketitle
\tableofcontents

\setcounter{section}{-1}

\section{Introduction}

In this article, by a semisimple algebraic group
we always mean a {\em connected} semisimple group.

Let $k$ be a field and let $G$ be a linear algebraic group over $k$.
Let $H$ and $H'$ be two algebraic $k$-subgroups of $G$.
Let $K/k$ be a field extension.
We write $H_K=H\times_k K$ and $H'_K=H'\times_k K$.
We say that $H$ and $H'$ are {\em conjugate over  $K$}
if there exists an element $g\in G(K)$ such that
\begin{equation*}
H'_K =g\cdot H_K \cdot g^{-1}.
\end{equation*}

We start with the following finiteness results for the set
of conjugacy classes of semisimple subgroups:

\begin{proposition} \label{finiteconjugation}
Let $G$ be a linear algebraic group over an algebraically closed field  $k$ of characteristic 0.
Then the set $C(G)$ of $G(k)$-conjugacy classes of semisimple $k$-subgroups of $G$ is finite.
\end{proposition}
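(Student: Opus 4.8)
The plan is to prove finiteness by combining two classical ingredients: a rigidity/openness statement for semisimple subgroups, together with the fact that a variety of finite type over $k$ has finitely many irreducible components (equivalently, that $G$ is Noetherian). Let me sketch the approach.

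The plan is to pass from subgroups to Lie subalgebras, and then combine the rigidity of semisimple subalgebras (Whitehead's lemma) with the fact that a variety of finite type over $k$ has only finitely many irreducible components.

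First I would reduce to a statement about Lie algebras. Since $\mathrm{char}\,k=0$, a connected algebraic $k$-subgroup $H\subseteq G$ is determined by its Lie algebra $\hh=\Lie H\subseteq\gGer:=\Lie G$, and $gHg^{-1}$ has Lie algebra $\Ad(g)\hh$. Hence two connected subgroups $H,H'$ are $G(k)$-conjugate if and only if $\hh$ and $\hh'$ are $\Ad(G(k))$-conjugate, and $H$ is semisimple if and only if $\hh$ is a semisimple Lie algebra. It therefore suffices to show that $\Ad(G(k))$ has only finitely many orbits on the set of semisimple subalgebras of $\gGer$. As the dimension of a subalgebra lies between $0$ and $\dim\gGer$, it is enough to bound, for each fixed $d$, the number of orbits of $d$-dimensional semisimple subalgebras.

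Fix $d$. Inside the Grassmannian $\mathrm{Gr}(d,\gGer)$, a projective $k$-variety, the $d$-dimensional Lie subalgebras cut out a closed $G$-stable subvariety $Y$, since closedness under the bracket is a closed condition. At a point $[\hh]\in Y$ the tangent space to $\mathrm{Gr}(d,\gGer)$ is $\Hom(\hh,\gGer/\hh)$; by the deformation theory of subalgebras the tangent space to $Y$ is the cocycle space $Z^1(\hh,\gGer/\hh)$, while the tangent space to the orbit $\Ad(G)\cdot[\hh]$ is the coboundary space $B^1(\hh,\gGer/\hh)$. When $\hh$ is semisimple, Whitehead's first lemma gives $H^1(\hh,\gGer/\hh)=0$, that is $Z^1=B^1$, so the orbit and $Y$ have equal Zariski tangent spaces at $[\hh]$. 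Since the orbit is a smooth locally closed subvariety of $Y$, this forces $\Ad(G)\cdot[\hh]$ to be \emph{open} in $Y$. Distinct orbits are disjoint, so the semisimple orbits form a family of pairwise disjoint nonempty open subsets of $Y$; as $Y$ is of finite type, it has finitely many irreducible components, and an irreducible space cannot contain two disjoint nonempty open subsets, so each component meets at most one such orbit. Hence the number of $d$-dimensional semisimple orbits is at most the number of components of $Y$, in particular finite, and summing over the finitely many values of $d$ yields the finiteness of $C(G)$.

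The main obstacle is the rigidity step in the third paragraph: identifying the tangent spaces to $Y$ and to the orbit with $Z^1$ and $B^1$ and then invoking Whitehead's lemma to get $H^1=0$. This is exactly where semisimplicity of $\hh$ is indispensable; mere reductivity would not suffice, since tori are not rigid (already the one-dimensional subtori of $\GL_n$ fall into infinitely many conjugacy classes). Once the openness of semisimple orbits is established, the reduction to Lie algebras and the counting via irreducible components are routine.
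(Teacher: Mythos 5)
Your proof is correct and follows essentially the same route as the paper: reduce to Lie subalgebras via the characteristic-zero correspondence between connected subgroups and their Lie algebras, and use Whitehead's lemma $H^1(\hh,\gGer/\hh)=0$ for semisimple $\hh$ to conclude that there are only finitely many conjugacy classes of such subalgebras. The only difference is that where the paper cites Richardson's rigidity theorem (finitely many $G(k)$-conjugacy classes of subalgebras with $H^1(\hh,\gGer/\hh)=0$) as a black box, you sketch its proof via the Grassmannian tangent-space comparison $B^1\subseteq T(\mathrm{orbit})\subseteq T(Y)\subseteq Z^1$ --- which is in fact Richardson's own argument, so the two proofs coincide in substance.
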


\begin{corollary}\label{finiteconjugation-R}
Let $k$ be a field of characteristic 0 of type (F)
in the sense of Serre \cite{SerreCG}, III.4.2, for example,
the field of real numbers $\R$ or a $p$-adic field
(a finite extension of the field of $p$-adic numbers $\Q_p$).
Let $G$ be a linear algebraic group over $k$.
Then the set $C(G)$ of $G(k)$-conjugacy classes of semisimple $k$-subgroups of $G$ is finite.
\end{corollary}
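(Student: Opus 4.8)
The plan is to reduce the statement over $k$ to the finiteness already established over the algebraic closure $\kbar$ (Proposition \ref{finiteconjugation}), and then to bound, by Galois cohomology, how many $G(k)$-conjugacy classes of $k$-subgroups can sit inside a single $G(\kbar)$-conjugacy class. The decisive use of the hypothesis that $k$ is of type (F) will be Serre's finiteness theorem \cite{SerreCG}, III.4.3: for every linear algebraic $k$-group $A$, the pointed set $H^1(k,A)$ is finite.

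First I would apply Proposition \ref{finiteconjugation} to $G_\kbar$, obtaining finitely many $G(\kbar)$-conjugacy classes $\mathcal{C}_1,\dots,\mathcal{C}_m$ of semisimple $\kbar$-subgroups of $G_\kbar$. Every semisimple $k$-subgroup $H\subseteq G$ base-changes to a semisimple $\kbar$-subgroup $H_\kbar$ lying in exactly one $\mathcal{C}_i$, and two $G(k)$-conjugate $k$-subgroups have base-changes in the same $\mathcal{C}_i$. Hence there is a well-defined map from the set $C(G)$ of $G(k)$-conjugacy classes to $\{\mathcal{C}_1,\dots,\mathcal{C}_m\}$, and it suffices to prove that each nonempty fibre is finite. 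Fixing such an $i$, I would choose a $k$-subgroup $H_i$ of $G$ with $(H_i)_\kbar\in\mathcal{C}_i$ to serve as a base point.

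Next I would realize the fibre cohomologically. Set $N_i=N_G(H_i)$, the normalizer of $H_i$ in $G$; since $\operatorname{char}k=0$ this is a smooth linear algebraic $k$-group, and the quotient variety $G/N_i$ exists. The point is that, writing $\Gamma=\Gal(\kbar/k)$, the $\Gamma$-fixed points of $G/N_i$, i.e. the elements of $(G/N_i)(k)$, correspond exactly to the $k$-subgroups $H$ of $G$ with $H_\kbar\in\mathcal{C}_i$ (a conjugate subgroup variety that is Galois-stable descends to $k$, using $\operatorname{char}k=0$), and $G(k)$ acts on this set by conjugation with orbits precisely the conjugacy classes forming the fibre. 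From the twisting exact sequence attached to $1\to N_i\to G\to G/N_i\to 1$, namely
\[
G(k)\lra (G/N_i)(k)\lra H^1(k,N_i)\lra H^1(k,G),
\]
the set of $G(k)$-orbits on $(G/N_i)(k)$ injects into $\ker[H^1(k,N_i)\to H^1(k,G)]$, and in particular into $H^1(k,N_i)$.

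Finally, Serre's theorem applies: $N_i$ is a linear algebraic $k$-group and $k$ is of type (F), so $H^1(k,N_i)$ is finite. Each fibre is therefore finite, and since there are only finitely many $\mathcal{C}_i$, the set $C(G)$ is finite, as claimed. The main obstacle—indeed the only nonformal point—is the cohomological identification of the fibre: one must verify that $(G/N_i)(k)$ is exactly the set of $k$-subgroups that are $\kbar$-conjugate (not merely $G(k)$-conjugate) to $H_i$, and that the orbit map into $H^1(k,N_i)$ is the standard one, so that finiteness of $H^1(k,N_i)$ genuinely bounds the number of classes. Once Serre's finiteness theorem is invoked, the remaining steps are purely formal.
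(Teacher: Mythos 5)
Your proof is correct and follows essentially the same route as the paper: reduce via Proposition \ref{finiteconjugation} to showing each fibre of $C_k\to C_\kbar$ is finite, identify that fibre with the set of $G(k)$-orbits on $(G/N_i)(k)$, and invoke the type (F) hypothesis through Serre. The only difference is presentational: the paper cites Serre \cite{SerreCG}, III.4.4, Theorem 5 (finiteness of orbit sets on homogeneous spaces) directly, whereas you rederive that statement from the exact sequence $G(k)\to(G/N_i)(k)\to H^1(k,N_i)$ and the finiteness of $H^1(k,N_i)$ — which is exactly how Serre proves Theorem 5.
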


Proposition \ref{finiteconjugation} and Corollary \ref{finiteconjugation-R}
will be proved in Section \ref{s:conjugation}.

Our main results are Theorems \ref{t:abs} and \ref{t:abs-R} below.

\begin{theorem}\label{t:abs}
Let $k$ be a field of characteristic 0 and let $\kbar$ be a fixed algebraic closure of $k$.
Let $G$ be a linear algebraic group over $k$.
There exists a natural number $d$ depending only on $G_\kbar$ with the following property:
if $H$ and $H'$ are two semisimple $k$-subgroups of $G$ that are conjugate over $\kbar$,
then they are conjugate over a finite extension $K/k$ of degree $[K:k]\le d$.
\end{theorem}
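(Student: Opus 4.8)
The plan is to reinterpret conjugacy of $H$ and $H'$ as the triviality of a single torsor, and then to bound the degree needed to find a rational point on it. Fix $g_0\in G(\kbar)$ with $g_0 H_{\kbar} g_0^{-1}=H'_{\kbar}$, and form the \emph{transporter}
\[
 T=\mathrm{Transp}_G(H,H')=\{g\in G : gHg^{-1}=H'\},
\]
a closed $k$-subvariety of $G$. Right translation by $N:=N_G(H)$ makes $T$ a right $N$-torsor over $k$, and by construction $H$ and $H'$ are conjugate over a field $K\supseteq k$ if and only if $T(K)\neq\varnothing$. Since $T(\kbar)\neq\varnothing$ by hypothesis, $T$ defines a class $[T]\in H^1(k,N)$, and the whole problem becomes: kill $[T]$ over a finite extension $K/k$ whose degree is bounded in terms of $G_{\kbar}$ alone.

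The uniformity of the bound comes from finiteness together with a cohomological killing lemma. By Proposition~\ref{finiteconjugation} applied to $G_{\kbar}$, there are only finitely many $G(\kbar)$-conjugacy classes of semisimple subgroups; hence $H$ is $\kbar$-conjugate to one of finitely many subgroups $H_1,\dots,H_n$, and since conjugate subgroups have conjugate normalizers, $N_{\kbar}\cong (N_G(H_i))_{\kbar}$ for some $i$. It therefore suffices to prove the following.

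\begin{property*}
For every linear algebraic group $N$ over a field $k$ of characteristic $0$ there is an integer $d(N_{\kbar})$, depending only on $N_{\kbar}$, such that every class in $H^1(k,N)$ is killed by a finite extension of $k$ of degree $\le d(N_{\kbar})$.
\end{property*}

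\noindent Granting this, I set $d=\max_i d\big((N_G(H_i))_{\kbar}\big)$, which depends only on $G_{\kbar}$, and apply it to $[T]$.

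To prove the killing lemma I would peel off the structure of $N$ in stages, each contributing a factor to $d(N_{\kbar})$. First, in characteristic $0$ the unipotent radical $N^u$ is defined and split over $k$, $H^1(k,N^u)$ is trivial, and a Levi decomposition $N=L\ltimes N^u$ identifies $H^1(k,N)$ with $H^1(k,L)$, reducing us to $N$ reductive. Next, the component group $\pi_0(N)$ is a finite $k$-group scheme of order bounded by $N_{\kbar}$; it becomes constant, and any torsor under it is trivialized, over an extension of bounded degree, which via the exact sequence $H^1(k,N^0)\to H^1(k,N)\to H^1(k,\pi_0(N))$ reduces us to an inner form of the connected reductive group $N^0$ (having the same geometric group, so the same target bound). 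For a connected reductive group I would use the abelianization $1\to N^{0,\mathrm{der}}\to N^0\to N^{0,\tor}\to 1$: the torus quotient $N^{0,\tor}$ splits over an extension whose Galois group embeds in $\GL_r(\ZZ)$ and hence has bounded order, so $H^1$ of the torus part is killed in bounded degree by Hilbert~$90$; this leaves a class coming from the semisimple group $N^{0,\mathrm{der}}$, which I lift through its simply connected cover $N^{\scc}\to N^{0,\mathrm{der}}$ with finite central kernel.

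The main obstacle is the remaining simply connected semisimple case: one must bound, uniformly in $k$ and in terms of the geometric group only, the degree of an extension killing a class in $H^1(k,N^{\scc})$. Here I would decompose $N^{\scc}$ into its $k$-simple factors, each a Weil restriction $\Res_{\ell/k}$ of an absolutely simple simply connected group, and treat a single absolutely simple factor by a type-by-type analysis, exploiting the concrete description of its torsors (for the classical groups, reducing a class to a central simple algebra with involution, or to a quadratic or hermitian form, and splitting that over an extension of degree bounded by the rank). The fact that only finitely many root data occur inside $G_{\kbar}$ is exactly what keeps all of these degrees bounded by a constant depending on $G_{\kbar}$ alone.
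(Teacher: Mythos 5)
Your reduction of Theorem \ref{t:abs} to a cohomological killing statement is exactly the paper's: the transporter is an $N$-torsor for $N=\mathcal{N}_G(H)$, conjugacy over $K$ is equivalent to triviality of its class in $H^1(K,N)$, and Proposition \ref{finiteconjugation} makes the bound depend only on $G_\kbar$ because only finitely many geometric isomorphism classes of normalizers occur. Where you diverge is in the proof of the killing lemma (the paper's Theorem \ref{t:cohom}), and there your dévissage has a genuine gap at the last two steps. First, the map $H^1(k,N^{\scc})\to H^1(k,N^{0,\mathrm{der}})$ is not surjective: the obstruction to lifting a class lives in $H^2(k,Z)$ for the finite central kernel $Z$, and it is \emph{false} in general that a class in $H^2(k,Z)$ with $|Z|$ bounded can be killed in bounded degree --- already for $Z=\mu_2$ there are fields with $2$-torsion Brauer classes of arbitrarily large index, and the index divides the degree of every splitting field. (The specific obstruction classes arising here are Tits classes and are in fact more constrained, but that requires an argument you have not supplied.) Second, even in the simply connected case, ``kill every class in $H^1(k,N^{\scc})$ in degree bounded by the type'' is a substantial theorem of Tits on splitting fields of torsors under simple groups; your sketch via algebras with involution and hermitian forms is plausible for the classical types but says nothing about $E_8$, $F_4$, etc., which cannot be reached that way.

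For comparison, the paper avoids the entire dévissage below the reductive level. After reducing to reductive identity component (via Sansuc's lemma, which is what really justifies your Levi-decomposition step), it invokes a result of Lucchini Arteche (Proposition \ref{p:LA15}): a reductive $k$-group $G$ contains a \emph{finite} $k$-subgroup $H$ with $H^1(K,H)\to H^1(K,G)$ surjective for every extension $K/k$, of order bounded by $w^{2r+1}d_T^{2r}$; Minkowski's bound on finite subgroups of $\GL_r(\ZZ)$ bounds $d_T$, hence $|H|$, purely in terms of $G_\kbar$ (Corollary \ref{c:LA}). Every class then lifts to a torsor under a finite group of bounded order, which is trivialized in degree at most its order (Lemma \ref{l:finite-k}). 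If you want to salvage your route, you would need either to control the $H^2$ obstruction (via bounds on the index of Tits algebras) together with Tits's splitting theorem, or to replace the semisimple analysis by a surjectivity statement of the Lucchini Arteche type.
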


In Section \ref{s:reduction}, we shall deduce Theorem \ref{t:abs}
from Proposition \ref{finiteconjugation}  and  the following theorem:

\begin{theorem}\label{t:cohom}
Let $k$ be a field of characteristic 0 and let $\kbar$ be a fixed algebraic closure of $k$.
Let $N$ be a linear algebraic group over  $k$.
Then there exists a natural number $d=d(N_\kbar)$ such that
any cohomology class $\xi\in H^1(k,N)$ can be killed
by a finite field extension of degree at most $d$
(that is, there exists a field extension $K=K(N,\xi)$ of $k$ of degree $[K:k]\le d$
such that the image of $\xi$ in $H^1(K,N)$ is 1).
\end{theorem}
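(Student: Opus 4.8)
The plan is to prove the statement by dévissage along the structure of $N$, at each stage trading the problem for $N$ for the (geometrically smaller) problems for a normal subgroup and a quotient. I will use repeatedly the exact sequence of pointed sets attached to a short exact sequence $1\to A\to B\to C\to 1$ of $k$-groups with $A$ normal in $B$, in two ways: if a class $\beta\in H^1(k,B)$ has trivial image in $H^1(k,C)$, then $\beta$ lies in the image of $H^1(k,A)$ (exactness at $H^1(k,B)$); and if a class in $H^1(k,A)$ becomes trivial over an extension $K$, then so does its image in $H^1(k,B)$ (functoriality and base change). Thus, to kill $\xi$ over a bounded extension it suffices, at each stage, either to trivialise the image of $\xi$ in the quotient and descend to the subgroup, or to lift $\xi$ to the subgroup and kill it there. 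The successive reductions are: (i) from $N$ to its identity component $N^0$ through the finite group $\pi_0(N)=N/N^0$; (ii) from $N^0$ to the reductive quotient $L=N^0/R_u(N^0)$ through the unipotent radical $R_u(N^0)$; (iii) from $L$ to the normaliser $N_L(T)$ of a maximal $k$-torus $T\subseteq L$; and (iv) from $N_L(T)$ to $T$ through the finite Weyl group $W=N_L(T)/T$.

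The outer reductions rest on three elementary facts, each uniform over every characteristic-$0$ base field $F$ and depending only on geometric data. (a) For a finite étale $F$-group $\Phi$ of order $m=|\Phi(\overline F)|$, every class in $H^1(F,\Phi)$ dies over an extension of degree $\le m$, since a $\Phi$-torsor is a finite $F$-scheme of length $m$ and hence has a residue field of degree $\le m$. (b) A split unipotent $F$-group, in particular $R_u(N^0)$ over any extension since the characteristic is $0$, is a successive extension of copies of $\GG_a$, hence special in the sense of Serre, so its $H^1$ is trivial. (c) An $F$-torus $T$ of rank $r=\dim T$ splits over a finite Galois extension whose degree is the order of the image of the Galois group in $\Aut X_*(T)=\GL_r(\ZZ)$, bounded by a Minkowski constant $M(r)$ depending only on $r$; over the splitting field $H^1$ vanishes by Hilbert 90. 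Applying (a) to $\Phi=\pi_0(N)$ performs reduction (i) at cost $m_0=|\pi_0(N_\kbar)|$; applying (b) makes reduction (ii) free, since trivialising the image in the cohomology of the reductive quotient $L$ then forces the class itself to be trivial; and applying (a) to $\Phi=W$ followed by (c) to $T$ performs reduction (iv) at cost $|W|\cdot M(r)$.

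The heart of the matter is reduction (iii): I claim the map $H^1(k,N_L(T))\to H^1(k,L)$ is surjective, so that $\xi$ can be lifted to $N_L(T)$ and then killed via (iv). Let $X_\xi$ be the $L$-torsor representing $\xi$. A reduction of its structure group to $N_L(T)$ amounts to a $k$-point of the quotient $X_\xi/N_L(T)$, which is the $\xi$-twist ${}_\xi(L/N_L(T))$ of the variety $L/N_L(T)$ of maximal tori of $L$; this twist is exactly the variety of maximal tori of the inner form ${}_\xi L$. Since the characteristic is $0$, the connected reductive group ${}_\xi L$ has a maximal torus defined over $k$ (Grothendieck), so the variety has a $k$-point and $\xi$ indeed lifts to $H^1(k,N_L(T))$. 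I expect this reduction, the passage from an arbitrary reductive class to the normaliser of a maximal torus, resting on the existence of a $k$-rational maximal torus in every inner form together with the torsor-reduction principle, to be the main obstacle and the only genuinely structural input; the remaining stages are bookkeeping with exact sequences.

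Chaining the four reductions, every $\xi\in H^1(k,N)$ is killed over an extension of $k$ of degree at most $d=m_0\cdot|W|\cdot M(r)$, where $m_0=|\pi_0(N_\kbar)|$, and $W$ and $r$ are the Weyl group and the rank of the reductive quotient of $N_\kbar^0$. Because twisting never alters the geometric group, the inner bounds remain valid over each successive extension, so every quantity here is determined by $N_\kbar$ alone, giving the required $d=d(N_\kbar)$.
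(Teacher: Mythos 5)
Your argument is correct, but it takes a genuinely different route from the paper at the decisive step. Both proofs begin the same way: pass to the reductive situation using the vanishing of $H^1$ of unipotent groups in characteristic $0$ (the paper does this via Sansuc's lemma), and both ultimately trace the bound back to Minkowski's theorem on finite subgroups of $\GL_r(\Z)$. But where you go next they diverge. The paper invokes a theorem of Lucchini Arteche producing a \emph{finite} $k$-subgroup $H$ of the reductive-identity-component group $G$, of order at most $w^{2r+1}d_T^{2r}$, such that $H^1(K,H)\to H^1(K,G)$ is surjective for \emph{every} extension $K/k$; the class is then killed inside this finite group exactly as in your fact (a), and Minkowski enters only to bound the splitting degree $d_T$ of a maximal torus. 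You instead use the classical surjectivity of $H^1(k,\mathcal{N}_L(T))\to H^1(k,L)$, proved by your twisting argument with the variety of maximal tori together with the existence of a maximal $k$-torus in every inner twist ${}_\xi L$, and then finish inside the normaliser by the extension $1\to T\to \mathcal{N}_L(T)\to W\to 1$, Hilbert 90 over the splitting field of $T$, and Minkowski to bound that splitting degree. (Your separate treatment of $\pi_0(N)$ is a harmless further difference: Lucchini Arteche's statement already covers non-connected groups with reductive identity component.) What the paper's route buys is brevity given the black box and a single finite subgroup capturing all of $H^1$ uniformly in the base field, which is reused in the real case in Section \ref{s:cohom-R}; your route is self-contained modulo classical facts (Grothendieck's theorem on maximal tori, Hilbert 90, Minkowski) and is close in spirit to the argument of the first arXiv version, which the authors note avoids Lucchini Arteche's result. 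Both give a bound depending only on $N_\kbar$; the explicit constants differ, and neither is claimed to be sharp.
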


We shall prove Theorem \ref{t:abs} in Section \ref{s:reduction}
by applying Theorem \ref{t:cohom} to the normalizer $N=\mathcal{N}_G(H)$ of $H$ in $G$.
For a proof of Theorem  \ref{t:cohom}, see Section \ref{s:cohom}.

We need a version of Theorem \ref{t:abs} for real number fields.
A {\em real number field} is a finite extension $k$ of $\Q$
contained in the field of real numbers $\R$, or, in other words,
a finite extension $k$ of $\Q$ equipped with an embedding into $\R$.

\begin{theorem}\label{t:abs-R}
Let $G$ be a linear algebraic group over a {\em real number field} $k\subset \R$.
There exists a natural number $d_\R$ depending only on $G_\C$ with the following property:
if $H$ and $H'$ are two semisimple $k$-subgroups of $G$ that are conjugate over $\R$,
then they are conjugate over some  finite {\em real extension}
$K/k$, \ $k\subset K\subset \R$,  of degree $[K:k]\le d_\R$.
\end{theorem}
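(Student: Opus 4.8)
The plan is to follow the strategy used for Theorem~\ref{t:abs}, replacing Theorem~\ref{t:cohom} by a real refinement and Proposition~\ref{finiteconjugation} by its complex counterpart. Fix $H$, set $N=\mathcal{N}_G(H)$, a linear algebraic $k$-group, and let $V=G/N$ be the homogeneous space whose $\kbar$-points parametrize the $\kbar$-conjugates of $H$; the subgroup $H$ itself provides a base point $v_0\in V(k)$. Since $H$ and $H'$ are conjugate over $\R$, the transporter $\mathrm{Transp}_G(H,H')$ is a nonempty $k$-subvariety of $G$ and hence has a $\kbar$-point, so $H$ and $H'$ are in fact conjugate over $\kbar$ and $H'$ determines a point $v'\in V(k)$. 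Let $\xi=\delta(v')\in H^1(k,N)$ be its image under the characteristic connecting map $\delta\colon V(k)\to H^1(k,N)$. The standard torsor formalism for homogeneous spaces shows that for every field extension $L/k$ the subgroups $H_L$ and $H'_L$ are $G(L)$-conjugate if and only if the image of $\xi$ in $H^1(L,N)$ is trivial. In particular, the hypothesis that $H$ and $H'$ are conjugate over $\R$ says precisely that $\xi$ dies in $H^1(\R,N)$, via the map induced by the fixed embedding $k\into\R$.

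Next I would invoke a real analogue of Theorem~\ref{t:cohom}: for a linear algebraic group $N$ over a real number field $k\subset\R$, there is a natural number $d_\R=d_\R(N_\C)$ such that every class $\xi\in H^1(k,N)$ whose image in $H^1(\R,N)$ is trivial can be killed by some finite real extension $K/k$, with $k\subset K\subset\R$ and $[K:k]\le d_\R$. Applying this to the class $\xi$ above yields a real extension $K/k$ with $[K:k]\le d_\R(N_\C)$ over which $\xi$ becomes trivial; by the previous paragraph, $H_K$ and $H'_K$ are then conjugate by an element of $G(K)$, with $K$ real as required.

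To obtain a bound depending only on $G_\C$, observe that $N_\C=\mathcal{N}_{G_\C}(H_\C)$ depends, up to isomorphism, only on the $G(\C)$-conjugacy class of the semisimple subgroup $H_\C$, since normalizers of conjugate subgroups are conjugate. By Proposition~\ref{finiteconjugation}, applied over the algebraically closed field $\C$, there are only finitely many such conjugacy classes, so $d_\R(N_\C)$ takes only finitely many values as $H$ ranges over all semisimple $k$-subgroups of $G$. Taking $d_\R$ to be the maximum of these values gives a constant depending only on $G_\C$, as desired.

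The main obstacle is the real analogue of Theorem~\ref{t:cohom} quoted above, and in particular the demand that the killing field be \emph{real}. I expect to prove it by a d\'evissage along the structure of $N$: passing to the component group $\pi_0(N)$, then to the reductive quotient of the identity component $N^0$ (the unipotent radical contributing nothing to cohomology in characteristic $0$), and finally to the central torus and the semisimple part. At each stage one must control not merely the degree of the auxiliary extension but also the existence of a real place above the given one; for instance, when splitting a torus occurring in $N$ one must choose a splitting field admitting a real embedding. The point is that the triviality of $\xi$ over $\R$ can be propagated through these reductions, so that only ``as much splitting as is compatible with the real place'' is ever needed. This archimedean bookkeeping, absent from the proof of Theorem~\ref{t:cohom}, is exactly where the real case genuinely differs from the algebraically closed one.
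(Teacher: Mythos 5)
Your reduction of the theorem to a cohomological killing statement is exactly the paper's (Section \ref{s:reduction}): form $N=\mathcal{N}_G(H)$, encode the pair $(H,H')$ as a class $\xi\in\ker[H^1(k,N)\to H^1(\R,N)]$ via the transporter torsor, and obtain uniformity in $H$ from the finiteness of conjugacy classes of semisimple subgroups (the paper uses Corollary \ref{finiteconjugation-R} over $\R$ where you use Proposition \ref{finiteconjugation} over $\C$; both work, since $N_\C$ depends only on the $\C$-conjugacy class). The ``real analogue of Theorem \ref{t:cohom}'' you invoke is precisely Theorem \ref{t:cohom-R} of the paper, so up to that point your argument is complete and matches the intended one.

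The genuine gap is that you have not proved this real analogue, and the d\'evissage you sketch for it would not go through as described. The step ``when splitting a torus occurring in $N$ one must choose a splitting field admitting a real embedding'' is impossible in general: a $k$-torus has a unique minimal splitting field $L$, and if $L\not\subset\R$ (e.g.\ $T=R_{k(i)/k}\,\G_m$) then \emph{no} real extension of $k$ splits $T$; the hypothesis that $\xi$ dies over $\R$ does not make this obstruction disappear. Your closing claim that ``only as much splitting as is compatible with the real place is ever needed'' is exactly the hard content, and it is asserted rather than established. The paper's actual route is quite different: after reducing to reductive identity component and handling $\pi_0$ (Lemma \ref{assume-conn}), it uses Lucchini Arteche's theorem (Proposition \ref{p:LA15}, via Corollary \ref{c:LA}) to lift $\xi$ to a finite $k$-subgroup of bounded order, kills the lifted class over a normal extension $L/k$ of bounded degree which may fail to be real, passes to $K=L\cap\R$ with $[L:K]\le 2$ (Lemma \ref{l:degree-two}), extracts from a representing cocycle a \emph{cyclic} subgroup $C$ defined over $K$, and then uses real approximation for the homogeneous space $G/C$ (Theorem \ref{p:LA-cyclic}) to correct the class by a coboundary term so that the remaining difference, trivial over $\R$, can be killed by a real extension of degree at most $|C|$ via the finite-group case (Corollary \ref{finite-real1finite}). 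None of these ingredients --- the finite subgroup with property (SUR), the quadratic gap between $L$ and $L\cap\R$, the cyclic subgroup generated by the cocycle value, and real approximation --- appears in your sketch, and without something playing their role the archimedean bookkeeping you allude to cannot be carried out.
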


In Section \ref{s:reduction}, we shall deduce Theorem \ref{t:abs-R}
from Corollary \ref{finiteconjugation-R}  and  the following theorem:

\begin{theorem}\label{t:cohom-R}
Let $k\subset \R$ be a real number field.
Let $N$ be a linear algebraic group over  $k$.
Then there exists a natural number $d_\R=d_\R(N_\C)$ such that
any cohomology class $\xi\in H^1(k,N)$ becoming 1 over $\R$
can be killed by a finite real extension of degree at most $d_\R$.
\end{theorem}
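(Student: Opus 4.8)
The plan is to follow the same dévissage that underlies Theorem \ref{t:cohom}, but to force every auxiliary field to be \emph{real} by exploiting that $\xi$ is trivial at the distinguished real place $v_0$ coming from $k\subset\R$. I fix $\kbar$ with $k\subset\kbar\subset\C$ and set $R_0=\kbar\cap\R$, the real closure of $k$ at $v_0$. Since the real closure of an ordered field is unique up to $k$-isomorphism and Galois cohomology over a real closed base is insensitive to the choice of such a field, $\xi$ is trivial over $\R$ if and only if it is trivial over $R_0$; as $R_0$ is the directed union of the real number fields $k\subset K\subset\R$, this already produces \emph{some} real splitting field. Hence the entire difficulty lies in the degree bound, and in particular the bound cannot be deduced formally from Theorem \ref{t:cohom}, since the bounded-degree field it supplies need not be real.

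First I would reduce to a connected group. Let $p\colon N\to\pi_0(N)$ be the projection and $\beta=p_*(\xi)$. The torsor $P$ under $N$ attached to $\xi$ maps to the $\pi_0(N)$-torsor $P_\Phi=P/N^0$, a finite étale $k$-scheme of degree $|\pi_0(N_\C)|$. Since $\xi_\R=1$ I may choose $p_\infty\in P(\R)$; its image lies in a connected component $\mathrm{Spec}(k_1)$ of $P_\Phi$, and the real embedding of $k_1$ determined by $p_\infty$ makes $k_1$ a real number field with $[k_1:k]\le|\pi_0(N_\C)|$. Over $k_1$ this component acquires a rational point, the fibre $Q\subset P_{k_1}$ above it is a torsor under the connected group $N^0$ with $Q(\R)\neq\emptyset$, and any field $K$ with $Q(K)\neq\emptyset$ kills $\xi$. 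It therefore suffices to kill a class $\eta=[Q]\in H^1(k_1,N^0)$ with $N^0$ connected and $\eta$ trivial at $v_0$, bounding $[K:k_1]$ in terms of $N^0_\C$.

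For connected $N^0$ I would pass to a Levi subgroup (the unipotent radical has vanishing $H^1$ in characteristic $0$) to reduce to $N^0$ reductive, and then separate the simply connected and toric contributions through Borovoi's abelianization $\mathrm{ab}^1\colon H^1(k_1,N^0)\to H^1_{\mathrm{ab}}(k_1,N^0)$. The semisimple simply connected part costs only a real quadratic extension: by Kneser--Harder $H^1(\widetilde N)$ is trivial at the finite and complex places and injects into the product over the real places, so choosing $\gamma\in k_1^\times$ positive at $v_0$ and negative at the finitely many other real places where the class is nontrivial, the real quadratic field $k_1(\sqrt\gamma)$ turns those places complex and kills the class while remaining real at $v_0$.

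The main obstacle is the toric part: an abelian class in the hypercohomology of a complex of tori, trivial at $v_0$, to be killed by a real extension of bounded degree. This is genuinely delicate, since a class can be split by an imaginary quadratic extension yet require a different real one. Already for $T=\Res^{(1)}_{E/k_1}\G_m$ with $E$ imaginary at $v_0$, the class of $a\in k_1^\times/N_{E/k_1}E^\times$ is trivial at $v_0$ exactly when $a>0$ there, and is then killed not by $E$ but by the real field $k_1(\sqrt a)$. The mechanism I would use in general is that triviality at $v_0$ supplies the positivity needed to realise the splitting data by adjoining real radicals of degree controlled by the splitting field of the tori (itself bounded by $\dim N^0$); combining this with the quadratic sign–flip above and with the Hasse principle for $N^0$ (whose kernel is detected by $\mathrm{ab}^1$) shows that over the compositum of these real extensions $\eta$ is everywhere locally trivial with trivial abelianization, hence trivial. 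All degrees so introduced depend only on $N^0_\C$, and together with the factor $|\pi_0(N_\C)|$ from the reduction to the connected case this yields the required bound $d_\R(N_\C)$.
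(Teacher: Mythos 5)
There is a genuine gap, and it sits exactly at the point you yourself flag as ``genuinely delicate'': the toric/abelian part. Your reductions to the connected and then reductive case are sound (and your torsor-theoretic handling of the component group is a clean variant of what the paper does), and the quadratic sign-flip argument for the simply connected part via the Kneser--Harder--Chernousov Hasse principle is correct. But after those steps the whole content of the theorem has been pushed into the claim that a class in $H^1(k_1,T)$ (or in the abelian hypercohomology of a complex of tori) that is trivial at $v_0$ can be killed by a \emph{real} extension of degree bounded in terms of the splitting field of $T$ --- and for this you offer only the example $T=\Res^{(1)}_{E/k_1}\G_m$ with $E$ imaginary quadratic, where classes happen to be elements $a\in k_1^\times$ modulo norms and positivity at $v_0$ lets you adjoin $\sqrt a$. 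For a general torus split by a (possibly non-abelian, non-real) extension $L/k_1$ of bounded degree, a class in $H^1(\Gal(L/k_1),T(L))$ has no radical description, and ``triviality at $v_0$ supplies the positivity needed to realise the splitting data by adjoining real radicals'' is a hope, not an argument. Note that this missing statement is itself an instance of the theorem being proved (take $N=T$), so nothing has been gained until it is established; the bounded-degree splitting field $L$ of $T$ cannot be used directly precisely because it need not be real, which is the same obstruction you started with.

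For comparison, the paper resolves this crux by an entirely different mechanism in the reductive case: Lucchini Arteche's theorem produces a \emph{finite} $k$-subgroup $H\subset G$ of bounded order with $H^1(K,H)\to H^1(K,G)$ surjective for all $K$; one kills the lifted class over a normal extension $L$ of bounded degree, sets $K=L\cap\R$ so that $[L:K]\le 2$, observes that the residual class over $K$ is represented by a cocycle generating a \emph{cyclic} subgroup $C\subset H$ defined over $K$, and then uses real approximation on the homogeneous space $G_K/C$ (which holds because $C$ is cyclic, hence the stabilizer becomes abelian after a quasi-trivial resolution) to replace the class by one in $H^1(K,C)$ that is trivial over $\R$, which is killed by a real extension of degree at most $|C|$ since $C$ is finite. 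If you want to pursue your abelianization route, you would need to supply a genuine proof of the bounded-degree real splitting statement for tori (and for the hypercohomology groups $H^1_{\mathrm{ab}}$), which is where all the difficulty lives.
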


We shall prove Theorem \ref{t:cohom-R} in Section \ref{s:cohom-R}.
This is the most complicated proof  of the present article.
The current proof, proposed by an anonymous referee,
uses a result of G.~Lucchini Arteche \cite{LA15} (see Proposition \ref{p:LA15})
and real approximation for homogeneous spaces with cyclic finite stabilizers
(see Theorem \ref{p:LA-cyclic}).
Theorem \ref{t:cohom-R} can be  also proved without
using the cited result of Lucchini-Arteche (see version 1 of \cite{BDR}).

\begin{subsec}
Now let $k$ be a real number field and let $G$ be a linear algebraic group over  $k$.
Consider the set $C(G_\R)$ of $G(\R)$-conjugacy classes of semisimple $\R$-subgroups of $G$;
by Corollary \ref{finiteconjugation-R}, this set is finite.
Let $C_0\subset C(G_\R)$ denote the set of those $c\in C(G_\R)$
for which there exists a semisimple subgroup in $c$ {\em defined over $k$}.
For each $c\in C_0$\hs, let us choose such a semisimple $k$-subgroup $H_c\subset G$ in $c$.
We obtain a finite set of subgroups $\Omega=\{H_c\ |\ c\in C_0\}$ with the following property:
any semisimple $k$-subgroup of $G$ is conjugate over $\R$ to some $H_c\in\Omega$.
The next corollary follows immediately from Theorem \ref{t:abs-R}.
\end{subsec}

\begin{corollary}\label{finite-real}
Let $k$ be a real number field and let $G$ be a linear algebraic group over  $k$.
Let $\Omega=\{H_c\ |\ c\in C_0\}$ be a finite set of semisimple $k$-subgroups of $G$ as above.
Then there exists a natural number $d=d(G_\C)$
such that any semisimple $k$-subgroup  $H\subset G$ is conjugate to some $H_c$ ($c\in C_0$)
over a finite real extension $K/k$, $k\subset K\subset\R$, of degree $[K:k]\le d$.
\end{corollary}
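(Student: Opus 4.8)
The plan is to reduce the statement directly to Theorem~\ref{t:abs-R}, exploiting the defining property of the finite set $\Omega$ recorded in the preceding paragraph. First I would fix an arbitrary semisimple $k$-subgroup $H\subset G$. Since $H$ is itself defined over $k$, the $G(\R)$-conjugacy class of $H_\R$ in $C(G_\R)$ lies in $C_0$ by definition of $C_0$; hence there is some $c\in C_0$ for which $H$ is $G(\R)$-conjugate to the chosen representative $H_c\in\Omega$. In particular $H$ and $H_c$ are conjugate over $\R$.

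Next I would apply Theorem~\ref{t:abs-R} to the pair consisting of $H$ and $H':=H_c$. Both are semisimple $k$-subgroups of $G$ --- the subgroup $H_c$ by the very choice made in constructing $\Omega$, which stipulates that $H_c$ be defined over $k$ --- and they are conjugate over $\R$ by the previous step. Theorem~\ref{t:abs-R} then yields a finite real extension $K/k$, with $k\subset K\subset\R$, of degree $[K:k]\le d_\R$ over which $H$ and $H_c$ become conjugate. Setting $d=d_\R$ completes the argument.

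The only point requiring any care --- and the reason the corollary is not entirely vacuous --- is the uniformity of the bound. The extension $K$, and hence its degree, a priori depend on $H$, but Theorem~\ref{t:abs-R} guarantees that $d_\R$ depends only on $G_\C$ and not on the particular subgroups being conjugated. Thus a single $d=d_\R=d(G_\C)$ works for every semisimple $k$-subgroup $H$ simultaneously, which is exactly what is asserted. I do not anticipate a genuine obstacle here: all the substance has already been absorbed into Theorem~\ref{t:abs-R} (and, via the construction of $\Omega$, into the finiteness statement of Corollary~\ref{finiteconjugation-R}), so the present corollary is a formal consequence of the material established above.
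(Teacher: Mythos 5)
Your argument is correct and is precisely the paper's: the authors state that the corollary ``follows immediately from Theorem \ref{t:abs-R}'', the point being exactly that the defining property of $\Omega$ supplies an $H_c$ conjugate to $H$ over $\R$, after which Theorem \ref{t:abs-R} gives the uniformly bounded real extension. Nothing further is needed.
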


\noindent
{\em Motivation.}
This article was motivated by earlier work of Daw and Ren
on the Zilber--Pink conjecture for Shimura varieties.
The relevance of Corollary \ref{finite-real} is explained
in Section 12 of their article \cite{DawRen17}.

\begin{notation*}
Let $k$ be a field of characteristic 0.
In this article, by a $k$-variety we mean a separated scheme
of finite type over $k$, not necessarily irreducible.
By an algebraic group over $k$, or, shorter, a $k$-group,
we always mean a {\em linear} algebraic group over $k$,
that is, an affine group scheme of finite type over $k$, not necessarily connected.
If $G$ is a $k$-group, we write $G^0$ for the identity component of $G$.
If, moreover, $G$ is connected, we denote by $R_u(G)$ the unipotent radical of $G$.
By a $k$-subgroup of $G$ we mean an algebraic $k$-subgroup of $G$.
\end{notation*}

\paragraph{\em Acknowledgements.}
Jinbo Ren would like to thank his supervisor Emmanuel Ullmo for regular discussions
and constant support during the preparation of this article
and he would like to thank Yongqi Liang for several useful discussions.
The authors are grateful to  Friedrich Knop for his
MathOverflow answer \cite{Knop-MO} to Jinbo Ren's question,
to Sean Lawton for the reference to Richardson's article \cite{Richardson},
and to Giancarlo Lucchini Arteche for useful comments.
We thank the anonymous referees for their helpful comments.
We especially thank one of the  referees for his/her suggestion
to use a result of Lucchini Arteche
and a result on real approximation for homogeneous spaces with cyclic finite stabilizers,
which permitted us to shorten the proofs significantly.

\section{Conjugation over an algebraically closed field\\ and over a  field of type (F)}
\label{s:conjugation}

\begin{subsec}
{\em Proof of Proposition \ref{finiteconjugation}.}
Write $\gGer=\Lie G$; the group $G(k)$ acts on $\gGer$ via the adjoint representation.
Let $\hh\subset \gGer$ be a Lie subalgebra; $\hh$ acts on itself,
on $\gGer$, and on the quotient space $\gGer/\hh$,
so we may consider the cohomology space $H^1(\hh,\gGer/\hh)$.
By Richardson \cite{Richardson}, Corollary (b) in the introduction,
there exist only finitely many $G(k)$-conjugacy classes of Lie subalgebras $\hh\subset \gGer$
such that $H^1(\hh,\gGer/\hh)=0$.

If $\hh$ is semisimple, then for any $\hh$-module $M$  we have $H^1(\hh,M)=0$
(see Chevalley and Eilenberg \cite{Chevalley-Eilenberg}, Theorem 25.1).
In particular, we have $H^1(\hh,\gGer/\hh)=0$.
It follows that there exist only finitely many $G(k)$-conjugacy classes
of {\em semisimple} Lie subalgebras of $\gGer$.

Now let $H_1$ and $H_2$ be two semisimple subgroups of $G$.
Let $\hh_1$ and $\hh_2$ denote their respective Lie algebras.
Assume that $\hh_1$ and $\hh_2$ are conjugate under $G(k)$, that is, there exists $g\in G(k)$
such that $\Ad(g)(\hh_1)=\hh_2$.
Since ${\rm char}(k)=0$, a connected algebraic subgroup $H\subset G$
is uniquely determined by its Lie algebra $\Lie H\subset \gGer$ (see  Humphreys \cite{Humphreys}, Section 13.1).
It follows that $g\hs H_1\hs g^{-1}= H_2$.
We see that if $\Lie H_1$ and $\Lie H_2$ are conjugate, then $H_1$ and $H_2$ are conjugate.
We conclude that there are only finitely many conjugacy classes of semisimple subgroups of $G$,
which proves the proposition.
\qed
\end{subsec}

\begin{remark}
The properties `connected' and `semisimple' required of the subgroups in Proposition \ref{finiteconjugation} are necessary.
Indeed, let $G$ be  the two-dimensional torus $\G_{m,k}^2$\hs,
where $\G_{m,k}$ denotes the multiplicative group over $k$.
Then clearly $G$ has infinitely many different $k$-subgroups,
and even infinitely many different connected $k$-subgroups,
and, clearly,  they are not conjugate in $G$.
Moreover, let $n\in\Z_{>0}$. Set
\[S_n=\{(x_1,x_2)\in \G_{m,k}^2\ |\ x_2=x_1^n\}\hs,\quad H_n=\G_{a,k}^2\rtimes S_n\subset \G_{a,k}^2\rtimes \G_{m,k}^2\hs,\]
where $\G_{a,k}$ denotes the additive group over $k$, on which $\G_{m,k}$ naturally acts.
We embed
\[H_n\subset \G_{a,k}^2\rtimes \G_{m,k}^2\into \GL_{4,k}\into {\rm SL}_{5,k}\hs.\]
One can easily check that the connected solvable algebraic groups $H_n$ are pairwise non-isomorphic, and therefore,
they are not conjugate in the simple group ${\rm SL}_{5,k}$.
\end{remark}

\begin{subsec}\label{ss:gen-1}
Let $k$ be a field of characteristic 0 and let $\kbar$ be a fixed algebraic closure of $k$.
Let $G$ be a linear algebraic group over $k$.
Let $C$ be a conjugacy class of connected $\kbar$-subgroups $\Hbar'$ of $G_\kbar$
that contains a subgroup  $H_\kbar=H\times_k \kbar$ defined over $k$.
Then $C$ is the set of $\kbar$-points of a $k$-variety $V$ on which $G$ acts transitively  by
\[ g*\Hbar'=g\cdot \Hbar'\cdot g^{-1}.\]
The stabilizer of the $k$-point $H$ of this variety is $N=\mathcal{N}_G(H)$.
Therefore, we may identify $V$ with $G/N$.
We identify the set of $k$-subgroups $H'\subset G$ that are conjugate to $H$ over $\kbar$ with $(G/N)(k)$,
and we identify the set of $G(k)$-conjugacy classes of such $k$-subgroups
with the set of orbits of $G(k)$ in $(G/N)(k)$.
\end{subsec}

\begin{subsec}
{\em Proof of Corollary \ref{finiteconjugation-R}.}
Let $k$ be a field of characteristic 0 of type (F) in the sense of Serre.
By Serre \cite{SerreCG}, III.4.4, Theorem 5, the set of orbits of $G(k)$
in $(G/N)(k)$ in Subsection \ref{ss:gen-1} is finite.
Therefore, the set of $G(k)$-conjugacy classes of connected $k$-subgroups in a $G(\kbar)$-conjugacy class is finite.

Let $C_k$ denote the set of $G(k)$-conjugacy classes of connected algebraic $k$-sub\-groups,
and let $C_\kbar$ denote the corresponding set for $\kbar$. Then we have a canonical map
\[ C_k\to C_\kbar.\]
As explained above, since $k$ is of type (F), all fibers of this map are finite.
By Proposition \ref{finiteconjugation}, the set of $G(\kbar)$-conjugacy classes
of connected {\em semisimple} $\kbar$-subgroups is finite,
and the corollary follows.
\qed
\end{subsec}

\section{Reductions}
\label{s:reduction}

\begin{subsec}
We show that, in order to prove Theorem \ref{t:abs}, it suffices to prove Theorem \ref{t:cohom}.
Indeed, by Proposition \ref{finiteconjugation}, the set $C$ of conjugacy classes
of semisimple subgroups of $G_\kbar$ is finite.
Therefore, it suffices to show that, if $H,H'\subset G$ are two such $k$-subgroups
{\em in a given $G(\kbar)$-conjugacy class $c\in C$},
then they are conjugate over a finite extension $K$ of $k$ of degree at most $d_c$\hs,
where $d_c$ depends only on $G_\kbar$ and $c$.

Set
\[Y=\{\hs g\in G(\kbar)\ |\ gHg^{-1}=H'\hs\}\]
and write $N=\sN_G(H)$.
Since $H$ and $H'$ are connected and ${\rm char}(k)=0$,
by Humphreys \cite{Humphreys}, Theorem 13.1, we have
\[Y=\{\hs g\in G(\kbar)\ |\ \Ad(g) (\Lie H)=\Lie H'\hs\},\]
and hence, the variety $Y$ is defined over $k$.
The $k$-group $N$ acts on the right on $Y$ by
\[g\mapsto gn \qquad (g\in G(\kbar),\ n\in N(\kbar)).\]
This action is simply transitive (over $\kbar$).
Hence, $Y$ is a principal homogeneous space (torsor) of $N$,
and so we obtain a cohomology class $\xi=\xi(H,H')\in H^1(k,N)$.
The two subgroups $H$ and $H'$ are conjugate over a field extension $K$ of $k$
if and only if $Y$ has a $K$-point,
hence, if and only if the extension $K/k$ kills $\xi$.
Since, by Theorem \ref{t:cohom}, the class $\xi$ can be killed
by a finite field extension $K/k$ of degree at most $d(N_\kbar)$,
the subgroups $H$ and $H'$ are conjugate over this field $K$.
Clearly, $N_\kbar$ depends (up to isomorphism) only on $G_\kbar$ and $c$.
\qed
\end{subsec}

\begin{subsec}
We show that, in order to prove Theorem \ref{t:abs-R},
it suffices to prove Theorem \ref{t:cohom-R}.
Indeed, by Corollary \ref{finiteconjugation-R}, the set $C_\R$ of $G(\R)$-conjugacy classes
of semisimple subgroups of $G_\R$ is finite.
Therefore, it suffices to show that, if $H,H'\subset G$ are two semisimple $k$-subgroups of $G$
{\em in a given $G(\R)$-conjugacy class $c\in C_\R$}, then they are conjugate
over  a finite real  extension $K$ of $k$ of degree at most $ d_{\R,c}$,
where $d_{\R,c}$ depends only on $G_\C$ and $c$.

As above, starting from two semisimple $k$-subgroups $H,H'\subset G$ that are conjugate over $\R$,
we obtain a cohomology class
\[\xi\in\ker[H^1(k,N)\to H^1(\R,N)],\]
where $N=\mathcal{N}_G(H)$.
By Theorem \ref{t:cohom-R}, the class $\xi$ can be killed
by a finite real extension $K/k$ of degree at most $d_\R(N_\C)$.
Then the subgroups $H$ and $H'$ are conjugate over this field $K$.
Clearly, $N_\C$ depends (up to isomorphism) only
on $G_\C$ and the conjugacy class of $H$ over $\C$.
\qed
\end{subsec}

\section{Taking quotient by a unipotent normal subgroup}
\label{s:Serre-Sansuc}

In this section $k$ is a field of characteristic 0.
We shall need the following result of Sansuc:

\begin{proposition}[\cite{Sansuc}, Lemma 1.13]
\label{l:Sansuc}
Let $G$ be an algebraic group over a  field $k$ of characteristic 0,
and let $U\subset G$ be a unipotent $k$-subgroup.
Assume that $U$ is normal in $G$.
Then the canonical map $H^1(k,G)\to H^1(k,G/U)$ is bijective.
\end{proposition}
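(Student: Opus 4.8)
The plan is to work with the short exact sequence of $k$-groups
\[ 1 \to U \to G \to Q \to 1, \qquad Q:=G/U, \]
and to analyse the induced map on $H^1$ by dévissage on $\dim U$, reducing to the case of an abelian kernel and invoking the vanishing of additive Galois cohomology. The input I would isolate at the outset is: for every $k$-form $W$ of a unipotent group one has $H^i(k,W)=1$ for $i=1,2$. Indeed $H^i(k,\G_a)=0$ for $i\ge 1$ (the additive group has trivial Galois cohomology, by the normal basis theorem), and in characteristic $0$ every unipotent group — hence every $k$-form of one, since unipotence is a geometric property — admits a filtration by normal $k$-subgroups with successive quotients isomorphic to $\G_a$. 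An induction through the resulting long exact sequences, taking a central $\G_a$ at each step so that $H^2$ is defined, yields the claim.

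Next I would set up the dévissage. The terms of the lower central series of $U$ are characteristic in $U$, hence normal in $G$, since $G$ normalises $U$. If $V\subsetneq U$ is a nontrivial such term, I would factor the canonical map as
\[ H^1(k,G)\to H^1(k,G/V)\to H^1(k,G/U), \]
where in the first arrow $V$ is a unipotent normal $k$-subgroup of $G$, and in the second $U/V$ is a unipotent normal $k$-subgroup of $G/V$, both of dimension strictly smaller than $\dim U$. By induction on $\dim U$ both arrows are bijective, so their composite is. This reduces the proposition to the case where $U$ has no such proper subgroup, that is, where $U$ is commutative; in characteristic $0$ a commutative unipotent group is a vector group $U\cong \G_a^{\,n}$.

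For the remaining abelian case the conjugation action of $G$ on $U$ factors through $Q$, so $U$ is a $Q$-module, and I would apply the twisting formalism for nonabelian cohomology (Serre, \cite{SerreCG}, I.5.5--5.7). For injectivity: the fibre of $H^1(k,G)\to H^1(k,Q)$ through a class $[z]$ is the image of $H^1(k,{}_zU)$ under the twisting identification, where ${}_zU$ denotes $U$ twisted by $z$; since ${}_zU$ is a $k$-form of the vector group $U$, one has $H^1(k,{}_zU)=1$, so every fibre is a single point. For surjectivity: given $[q]\in H^1(k,Q)$, the obstruction to lifting it to $H^1(k,G)$ lies in $H^2(k,{}_qU)$, the second cohomology of a $k$-form of a vector group, which vanishes; hence $[q]$ lies in the image. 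Combining the two gives the bijection.

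The main obstacle, and the reason for the reduction to an abelian kernel, is precisely the nonabelian surjectivity: for a general kernel there is no $H^2$ in which to place the lifting obstruction, so one cannot argue directly. The dévissage is engineered so that at the decisive step the kernel is a vector group, the obstruction lands in an honest $H^2$ with abelian coefficients, and the vanishing $H^1(k,\G_a)=H^2(k,\G_a)=0$ applies. The two points requiring care are therefore the verification that the twisted groups ${}_zU$ and ${}_qU$ remain unipotent $k$-forms of vector groups, so that their higher cohomology vanishes, and the bookkeeping of the twisting construction identifying the fibres and the lifting obstruction.
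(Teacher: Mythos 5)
Your proposal is correct and follows essentially the same route as the paper: induction on $\dim U$ to reduce to a commutative unipotent kernel, then the twisting formalism of Serre I.5.5--5.6 together with the vanishing of $H^1$ and $H^2$ for $k$-forms of vector groups. The only (immaterial) difference is that you peel off the derived subgroup $[U,U]$ to make $U$ itself abelian, whereas the paper peels off the center $Z(U)$ and applies the abelian-kernel argument to the extension $1\to Z(U)\to G\to G/Z(U)\to 1$ at each stage.
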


Sansuc assumes that $G$ is connected, but his proof does not use this assumption.
As Sansuc's proof is concise,  we provide a more detailed  proof.

We need a lemma.

\begin{lemma}[well-known]
\label{l:comm-unip}
Let $U$ be a {\emm commutative} unipotent algebraic group over a field $k$ of characteristic 0. Then:
\begin{enumerate}
\item[(i)] $U\simeq \G_{a,k}^d$, where  $\G_{a,k}$ is the additive group
and $d\ge 0$ is an integer (the dimension of $U$).
\item[(ii)] Any twisted form of $U$ is isomorphic to $U$.
\item[(iii)] $H^n(k,U)=0$ for all $n\ge 1$.
\end{enumerate}
\end{lemma}

\begin{proof}
For (i)  see, for instance,  Milne \cite[Corollary 14.33]{Milne}.
Any twisted form of $U=\G_{a,k}^d$ is again a commutative unipotent $k$-group
of the same dimension $d$, and by (i)
it is isomorphic to $\G_{a,k}^d$, whence (ii).
By Serre \cite{SerreLF}, X.1, Proposition 1, we have $H^n(k,\G_{a,k})=0$ for all $n\ge1$, and (iii) follows.
\end{proof}

\begin{subsec}
{\em Proof of Proposition \ref{l:Sansuc}.}
We prove the proposition by induction on the dimension of $U$.
We may and shall assume that $U\ne \{1\}$.
Let $Z=Z(U)$ denote the center of $U$. Then $Z\neq 1$ because $U$ is nilpotent.
Clearly, $Z$ is a commutative unipotent group.
By Lemma  \ref{l:comm-unip}(i) we have $\dim Z>0$.
Since $Z$ is a characteristic subgroup of $U$, we see that $Z$ is normal in $G$.
Set $U'=U/Z$ and $G'=G/Z$.
Then $U'$ is a normal unipotent subgroup of $G'$ and $\dim \,U'<\dim\, U$.
We factorize the canonical homomorphism $G\to G/U$ as
\[ G\labelto{p} G'\labelto{q} G'/U'=G/U.\]
We obtain a  factorization of the canonical map $H^1(k,G)\to H^1(k,G/U)$ as
   \[H^1(k,G) \labelto{p_*} H^1(k,G') \labelto{q_*} H^1(k,G'/U')=H^1(k,G/U).\]
Since $\dim U'<\dim U$, by the induction hypothesis the map $q_*$ is bijective.
It remains to show that the map $p_*$ is bijective.

Consider the short exact sequence of $k$-groups
   \[1\lra  Z\labelto{i}  G\labelto{p}  G/Z\lra 1\]
and the induced cohomology exact sequence
\[H^1(k,Z)\labelto{i_*} H^1(k,G)\labelto{p_*} H^1(k,G/Z).\]
Since by Lemma \ref{l:comm-unip} we have  $H^1(k,\hs_c Z)=0$
and $H^2(k,\hs_c Z)=0$ for any cocycle $c\in Z^1(k,G/Z)$,
by Serre \cite{SerreCG}, I.5.5, Corollary 2 of Proposition 39, and
I.5.6, Corollary of Proposition 41, the map  $p_*$ is bijective,
which completes the proof of the proposition. \qed
\end{subsec}

\section{Killing a first cohomology class over an arbitrary field\\ of characteristic 0}
\label{s:cohom}

In this section we prove Theorem \ref{t:cohom}, which we state here in a more precise form.

\begin{theorem}\label{t:cohom-bis}
Let $\kbar$ be an algebraically closed field of characteristic 0.
Let $\Gbar$ be a linear algebraic group over  $\kbar$.
Then there exists a natural number $d=d(\Gbar)$ with the following property:
\begin{property*}[$*$]
For any subfield $k\subset \kbar$ such that $\kbar$
is an algebraic closure of $k$, for any $k$-form $G$ of $\Gbar$,
and for  any cohomology class $\xi\in H^1(k,G)$, the class $\xi$ can be killed
by a finite field extension $K/k$ of degree at most $d$.
\end{property*}
\end{theorem}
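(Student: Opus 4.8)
The plan is to prove property $(*)$ by \emph{dévissage}, reducing the estimate to the two cases of tori and of finite groups, where uniform bounds are immediate. The guiding constraint is that every bound produced must be a geometric invariant of $\Gbar$ alone, so that it is unaffected by the choice of subfield $k$, of the $k$-form $G$, or of the class $\xi$, and in particular survives base change to the varying fields produced during the induction. The basic combination step is the following: for a short exact sequence of $k$-groups $1\to G'\to G\to G''\to 1$, if $G'$ and $G''$ both have the bounded-killing property with bounds $d'$ and $d''$, then so does $G$, with bound $d'd''$. Indeed, given $\xi\in H^1(k,G)$, first kill its image in $H^1(k,G'')$ over some $K_1/k$ with $[K_1:k]\le d''$; by exactness of the cohomology sequence of pointed sets, $\xi_{K_1}$ then lies in the image of $i_*\colon H^1(K_1,G')\to H^1(K_1,G)$, say $\xi_{K_1}=i_*(\eta)$, and killing $\eta$ over $K_2/K_1$ with $[K_2:K_1]\le d'$ gives $\xi_{K_2}=i_*(\eta_{K_2})=1$. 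No twisting is needed, since after the first step one works only in the fibre over the base point; and the bound for $G'$ may legitimately be applied over $K_1$ because it depends only on $G'_\kbar$.

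Next I would settle the two base cases. For a $k$-torus $T$ with $T_\kbar\cong\G_m^r$, the Galois action factors through a finite subgroup of $\Aut(X^*(T))=\GL_r(\Z)$, so $T$ splits over a finite extension $L/k$ whose degree is at most the maximal order $m(r)$ of a finite subgroup of $\GL_r(\Z)$ (Minkowski--Jordan); since $H^1(L,T_L)=0$ by Hilbert 90, restriction to $L$ kills every class, giving bound $m(r)$. For a finite $k$-group $F$, which is étale as $\mathrm{char}\,k=0$, a class $\xi$ corresponds to an $F$-torsor $\mathrm{Spec}(A)$ with $A$ an étale $k$-algebra of dimension $|F(\kbar)|$; any factor field of $A$ furnishes a point of degree at most $|F(\kbar)|$, over which the torsor is trivial, so $F$ has the property with bound $|F(\kbar)|$.

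Now I would carry out the reduction. Applying Sansuc's Proposition \ref{l:Sansuc} to the normal unipotent subgroup $R_u(G^0)$, replace $G$ by $G/R_u(G^0)$, the induced map on $H^1$ being bijective, so that the identity component becomes reductive. The combination step applied to $1\to G^0\to G\to \pi_0(G)\to 1$, with $\pi_0(G)$ finite, then reduces everything to $G$ connected reductive. For such $G$, the crucial input is that the inner twist ${}_\xi G$ admits a maximal $k$-torus, maximal tori existing over any field. Since the variety of maximal tori of $G$ is $G/\sN$ with $\sN=\sN_G(T)$ for a maximal torus $T$, such a maximal $k$-torus of the twist is a $k$-point of the twisted variety $Y/\sN$, i.e.\ a reduction of the $G$-torsor $Y$ to $\sN$; equivalently $\xi$ lifts to some $\eta\in H^1(k,\sN)$, and killing $\eta$ kills $\xi$. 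As $\sN$ sits in $1\to T\to\sN\to W\to 1$ with $T$ of rank $r=\mathrm{rank}\,\Gbar$ and $W$ the finite Weyl group, the combination step and the two base cases bound $d(\sN)$, and hence $d(\Gbar)$, by $m(r)\cdot|W|\cdot|\pi_0(\Gbar)|$, a quantity depending only on $\Gbar$.

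The step I expect to be the main obstacle is the connected reductive case, namely expressing an arbitrary class as coming from the normalizer of a maximal torus. This rests on the existence of a maximal $k$-torus in the twisted group ${}_\xi G$ together with the identification of $Y/\sN$ with the variety of maximal tori of ${}_\xi G$, which must be set up carefully to turn a $k$-point into a genuine reduction of structure group. The second delicate point, pervading the whole argument, is the bookkeeping guaranteeing that each intermediate bound ($r$, $|W|$, $|\pi_0|$) is a geometric invariant of $\Gbar$, so that it remains valid over all the auxiliary fields generated by the dévissage.
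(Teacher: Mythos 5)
Your proposal is correct, but it takes a genuinely different route from the paper in the key step. After the common reductions (Sansuc's lemma to make the identity component reductive; the finite case handled by bounding the degree of the field of definition of a point of the torsor; Minkowski's bound on finite subgroups of $\GL_r(\Z)$), the paper invokes a theorem of Lucchini Arteche producing a \emph{finite} $k$-subgroup $H\subset G$ of order bounded in terms of $\Gbar$ such that $H^1(K,H)\to H^1(K,G)$ is surjective for every extension $K/k$, and then concludes directly from the finite case. You instead use the classical surjectivity of $H^1(k,\sN_G(T))\to H^1(k,G)$ for $G$ connected (Grothendieck's existence of maximal tori in the inner twist ${}_\xi G$, plus the identification of the twisted variety ${}_\xi(G/\sN)$ with the variety of maximal tori of ${}_\xi G$ --- this is Serre, \emph{Galois Cohomology}, III.4.3, Lemma 6, so your ``main obstacle'' is in fact a known lemma), and then d\'evissage of $\sN$ through $1\to T\to\sN\to W\to 1$ using Hilbert 90 over a splitting field of degree $\le m(r)$ and the finite case for $W$; the component group is absorbed by your combination step applied to $1\to G^0\to G\to\pi_0(G)\to 1$, whereas the paper's cited result already covers non-connected groups with reductive identity component. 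Your combination step is sound as stated (exactness of $H^1(K_1,G')\to H^1(K_1,G)\to H^1(K_1,G'')$ at the middle term is exactly the statement about the fibre over the base point, so no twisting is needed), and all your constants $r$, $|W|$, $|\pi_0|$, $m(r)$ are invariants of $\Gbar$, so the uniformity survives the base changes. Your route is more elementary and self-contained and yields a visibly smaller bound $m(r)\,|W|\,|\pi_0(\Gbar)|$ than the paper's $\gamma(\Gbar^{\red})=w^{2r+1}\beta(r)^{2r}$; what the paper's route buys is that the class is shown to come from a \emph{finite} subgroup $H$, which is the input actually needed for the harder real-field Theorem \ref{t:cohom-R-bis} (where one further descends to a cyclic subgroup $C\subset H$), so the normalizer argument would not suffice there.
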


First, we prove Theorem \ref{t:cohom-bis}  for finite groups.

\begin{lemma}\label{l:finite-k}
Let $\Gbar$ be a finite $\kbar$-group. Then $\Gbar$ satisfies $(*)$ with $d=|\Gbar|$.
\end{lemma}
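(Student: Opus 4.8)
The plan is to exploit the fact that, since $\mathrm{char}\,\kbar=0$, the finite $\kbar$-group $\Gbar$ is \emph{\'etale}, so any $k$-form $G$ is a finite \'etale $k$-group scheme of order $|\Gbar|=d$. Equivalently, $G$ is determined by the finite group $\Gamma=G(\kbar)$ equipped with a continuous action of $\Gal(\kbar/k)$, and $H^1(k,G)$ is the usual (nonabelian) Galois cohomology set $H^1(\Gal(\kbar/k),\Gamma)$. The whole argument will rest on translating the statement into the language of torsors and then reading off the degree bound from the structure of a finite \'etale scheme.

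First I would recall the torsor dictionary. A class $\xi\in H^1(k,G)$ corresponds to a $G$-torsor $T$ over $k$; since $G$ is finite \'etale of order $d$, the torsor $T$ is itself a finite \'etale $k$-scheme of degree $d$. The key observation is that $\xi$ is killed by a field extension $K/k$ precisely when $T$ acquires a $K$-point, that is, when $T(K)\neq\emptyset$. Thus it suffices to produce a finite extension of degree at most $d$ over which $T$ has a rational point.

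Next, because $T$ is finite \'etale over $k$, I would write $T=\mathrm{Spec}\,A$ with $A=\prod_{i=1}^{r}K_i$ a finite product of finite separable field extensions $K_i/k$ satisfying $\sum_i[K_i:k]=\deg T=d$. Each projection $A\to K_i$ furnishes a $K_i$-point of $T$, and $[K_i:k]\le d$. Taking $K=K_1$ (any single factor; note $T\neq\emptyset$ since $d\ge 1$), we obtain a finite extension of degree at most $d$ over which $T$ has a point, hence over which $\xi$ becomes trivial.

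The argument is essentially formal once this dictionary is in place, and I do not expect a serious obstacle. The only point requiring a little care is the degree bound itself: one must observe that a degree-$d$ finite \'etale scheme always has a closed point whose residue field has degree $\le d$ over $k$, which is immediate from the decomposition of $A$ into a product of fields. Equivalently, in cocycle terms, one may choose any orbit of $\Gal(\kbar/k)$ acting, via the twisted action, on the underlying set $\Gamma$ of $T(\kbar)$; the stabilizer of a point in that orbit is an open subgroup of index at most $|\Gamma|=d$, and its fixed field $K$ satisfies $[K:k]\le d$ and kills $\xi$.
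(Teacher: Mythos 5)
Your proof is correct and is essentially the paper's argument: the paper likewise identifies $\xi$ with a $G$-torsor $\sT$, notes $|\sT(\kbar)|=|\Gbar|$, and observes that any point of $\sT(\kbar)$ lies in a Galois orbit of size at most $|\Gbar|$, hence is defined over an extension of degree at most $|\Gbar|$, which kills $\xi$. Your scheme-theoretic rephrasing via the decomposition of the coordinate ring into a product of fields is just a repackaging of the same orbit--stabilizer observation.
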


\begin{proof}
Let $G$ be a $k$-form of $\Gbar$. A cohomology class $\xi\in H^1(k,G)$ defines a torsor $\sT=\sT_\xi$ of $G$.
By definition, we have $|\sT(\kbar)|=|\Gbar|$.
For any point $t\in \sT(\kbar)$, the $\Gal(\kbar/k)$-orbit of $t$ has cardinality at most $|\Gbar|$,
hence, $t$ is defined over a finite extension of $k$ in $\kbar$ of degree at most $|G|$.
Since this extension kills $\xi$, the proof is complete.
\end{proof}

In what follows, we shall need two known results.

\begin{proposition}[Minkowski, cf.~Friedland \cite{finitegln}]
\label{p:minkowski}
For any natural number $r$, there is a constant $\beta=\beta(r)>0$
such that every finite subgroup of $\GL_r(\ZZ)$ has cardinality at most $\beta$.
\end{proposition}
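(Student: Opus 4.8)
The plan is to reduce the problem to a single finite group by reduction modulo a prime. I would fix once and for all an odd prime $p$ (for instance $p=3$). The reduction-of-entries map gives a group homomorphism $\pi\colon\GL_r(\Z)\to\GL_r(\mathbb{F}_p)$, and I claim that $\beta(r):=|\GL_r(\mathbb{F}_p)|=\prod_{i=0}^{r-1}(p^r-p^i)$ does the job. Indeed, if $F\subset\GL_r(\Z)$ is a finite subgroup, it suffices to show that $\pi$ restricts to an injection on $F$, for then $|F|\le|\GL_r(\mathbb{F}_p)|=\beta(r)$. Since $F\cap\ker\pi$ is a finite subgroup of $\ker\pi$, injectivity of $\pi|_F$ follows as soon as we know that $\ker\pi$, the principal congruence subgroup $\Gamma_p$, contains no nontrivial element of finite order.

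The heart of the argument is therefore the following lemma: for an odd prime $p$, the group $\Gamma_p=\ker[\GL_r(\Z)\to\GL_r(\mathbb{F}_p)]$ is torsion-free. To prove it, I would suppose $g\in\Gamma_p$ has finite order with $g\ne I$, and, replacing $g$ by a suitable power, assume $g$ has prime order $\ell$. Write $g=I+p^m B$, where $m\ge 1$ is the exact power of $p$ dividing $g-I$ and $B$ is an integer matrix with at least one entry a unit modulo $p$. The relation $g^\ell=I$ then expands, via the binomial theorem, to
\[
0=g^\ell-I=\ell\,p^m B+\sum_{j=2}^{\ell}\binom{\ell}{j}p^{jm}B^{\,j},
\]
and I would compare $p$-adic valuations of the two sides, working on the entry where $B$ is a unit.

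The main obstacle, and the only place where the oddness of $p$ is used, is the case $\ell=p$. When $\ell\ne p$ the leading term $\ell p^m B$ has $p$-valuation exactly $m$, while every remaining term is divisible by $p^{2m}$, hence by $p^{m+1}$; reducing the displayed identity modulo $p^{m+1}$ forces $\ell B\equiv 0\pmod p$ and thus $B\equiv 0\pmod p$, a contradiction. When $\ell=p$ the leading term is $p^{m+1}B$, so I must check that every term with $j\ge 2$ is divisible by $p^{m+2}$: for $2\le j\le p-1$ this uses $p\mid\binom{p}{j}$ together with $jm\ge 2m$, giving valuation at least $1+2m\ge m+2$, and for $j=p$ it uses $\binom{p}{p}=1$ with $pm\ge m+2$. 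This is exactly where $p\ge 3$ enters, since for $p=2$ the element $-I\in\Gamma_2$ is torsion and the valuation bookkeeping collapses. Granting the lemma, $F\cap\ker\pi=\{I\}$, so $\pi|_F$ is injective and the stated bound follows at once.
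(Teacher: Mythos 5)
Your proof is correct. Note, though, that the paper does not actually prove this proposition: it is quoted as a known result of Minkowski, with Friedland's article cited for a modern reference, and only the \emph{existence} of the bound $\beta(r)$ is used later (in Corollary \ref{c:LA}). Friedland's paper determines the exact maximal orders of finite subgroups of $\GL_r(\Q)$, which is far more precise than what is needed. Your argument is the classical Minkowski reduction: the congruence subgroup $\ker[\GL_r(\Z)\to\GL_r(\mathbb{F}_p)]$ is torsion-free for an odd prime $p$, so every finite subgroup injects into $\GL_r(\mathbb{F}_3)$, giving the explicit (non-optimal but perfectly adequate) bound $\beta(r)=\prod_{i=0}^{r-1}(3^r-3^i)$. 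Your valuation bookkeeping is right in both cases: for $\ell\neq p$ the term $\ell p^m B$ has valuation exactly $m$ on a suitable entry while the tail is divisible by $p^{2m}\ge p^{m+1}$; for $\ell=p$ the leading term is $p^{m+1}B$ and the tail is divisible by $p^{m+2}$ precisely because $p\ge 3$ (and indeed $-I\in\Gamma_2$ shows the statement fails for $p=2$). The passage to an element of prime order and the use of the binomial theorem (valid since $I$ and $p^mB$ commute) are both sound. In short: you have supplied a correct, self-contained, elementary proof of a statement the paper treats as a black box; what you lose relative to the cited reference is only the sharpness of the constant, which is irrelevant here.
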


\begin{proposition}[Lucchini Arteche \cite{LA15}, Corollary 18]
\label{p:LA15}
Let $k$ be a field of characteristic 0.
Let $G$ be a linear algebraic $k$-group with reductive identity component.
Let $T$ be a maximal $k$-torus in the identity component $G^0$ of $G$, and let $W=\sN_G(T)/T$,
which is a finite group.
We write $r$ for the dimension of $T$, and write $w$ for the order of $W$.
Let $L/k$ be a field extension splitting $T$ whose degree we denote by $d_T$.
Then there exists a finite $k$-subgroup $H$ of $G$ of order at most $w^{2r+1}d_T^{2r}$ with the property
\begin{property*}[\rm SUR]
For every field  $K\supset k$,  the natural map $H^1(K,H)\to H^1(K,G)$ is surjective.
\end{property*}
\end{proposition}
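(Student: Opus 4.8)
The plan is to descend from $G$ to the normalizer $N=\sN_G(T)$, then exploit the extension $1\to T\to N\to W\to 1$, and finally replace $T$ by a finite torsion subgroup: the group $H$ we seek will be an extension of $W$ by some $T[m]$. \emph{Reduction to the normalizer.} First I would show that $H^1(K,N)\to H^1(K,G)$ is surjective for every field $K\supseteq k$. Since all maximal tori of a reductive group are geometrically conjugate and maximal tori exist over any field of characteristic $0$, every $G$-torsor $E$ over $K$ produces, through a maximal $K$-torus of ${}^E\!G^0$ of the same geometric type as $T$, a reduction of structure group to $N$; the disconnected case is absorbed by the surjection $N\onto G/G^0$ coming from $G=G^0\cdot N$. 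Thus it suffices to find a finite $k$-subgroup $H\subset N$ for which $H^1(-,H)\to H^1(-,N)$ is onto.

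\emph{The torus, via Kummer.} For $T$, split by $L/k$ of degree $d_T$, restriction--corestriction through $L$ shows that $d_T$ annihilates $H^1(K,T)$ for every $K$ (as $H^1$ of a split torus vanishes by Hilbert 90). Hence the Kummer sequence $1\to T[m]\to T\labelto{m}T\to 1$ makes $H^1(K,T[m])\onto H^1(K,T)$ as soon as $d_T\mid m$, with $|T[m]|=m^{r}$. After twisting one needs this uniformly for the tori ${}_aT$ obtained from $T$ by a class $a\in H^1(K,W)$; such a ${}_aT$ is split by an extension whose Galois group is an extension of a subgroup of $W$ by $\Gal(LK/K)$, hence of degree dividing $w\,d_T$, so $w\,d_T$ annihilates $H^1(K,{}_aT)$.

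\emph{Splitting the extension modulo torsion.} I would then take $H=\pi^{-1}(s(W))$, where $\pi\colon N\to N/T[m]$ and $s\colon W\to N/T[m]$ is a section of the induced extension $1\to T/T[m]\to N/T[m]\to W\to 1$. The class of $N$ in $H^2(W,T(\kbar))$ is killed by $w=|W|$; passing from $T$ to $T/T[m]\cong T$ (the target of the $m$-power isogeny) multiplies this class by $m$, so it vanishes geometrically once $w\mid m$, giving a splitting over $\kbar$. The subtle point is descending $s$ to $k$: the geometric splittings form a torsor under $H^1(W,(T/T[m])(\kbar))$, and the obstruction to a Galois-equivariant section is a further class in the Galois cohomology of this finite module, again killed after enlarging $m$ by a factor bounded in terms of $w$ and $d_T$. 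Taking $m$ divisible by $w\,d_T$ and by this second factor, so that $m\le (w\,d_T)^{2}$, yields a finite $k$-subgroup $H$ with $H\cap T=T[m]$ and $H\onto W$, of order $|H|=w\,m^{\,r}\le w\,(w\,d_T)^{2r}=w^{2r+1}d_T^{2r}$.

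\emph{Verifying \textup{(SUR)} and the main obstacle.} To conclude I would check $H^1(K,H)\onto H^1(K,N)$ by the standard twisting argument over $W$: both groups surject onto $W$, and over a fixed $\bar\eta\in H^1(K,W)$ the fibres of $H^1(K,H)\to H^1(K,W)$ and of $H^1(K,N)\to H^1(K,W)$ are described, following Serre \cite{SerreCG}, I.5.5--5.7, by $H^1(K,({}_aT)[m])$ and $H^1(K,{}_aT)$ for a cocycle $a$ lifting $\bar\eta$; surjectivity on fibres then follows from the second step, since $w\,d_T\mid m$. I expect the real difficulty to be the previous step, namely producing the section $s$ \emph{over $k$} rather than merely over $\kbar$: the Galois descent of the geometric splitting, compatibly with keeping $H$ a genuine $k$-subgroup and with absorbing the extension cocycle into $T[m]$. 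This is precisely where the arithmetic of the $k$-form enters and where the squared exponent in the bound originates; the purely geometric input is elementary (the faithful action $W\hookrightarrow\GL_r(\Z)$ on $X^*(T)$, with $w$ itself bounded by Minkowski's Proposition \ref{p:minkowski}), but the compatible, Galois-stable choice of lifts of $W$ is the technical crux.
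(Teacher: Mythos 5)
First, a point of order: the paper does not prove this proposition --- it is quoted from Lucchini Arteche \cite{LA15}, Corollary 18 --- so there is no in-paper argument to compare yours with, and I can only measure your sketch against what a complete proof requires. Your architecture is the right one: reduce to $N=\sN_G(T)$ via the variety of maximal tori, kill $H^1$ of the relevant (twisted) tori by restriction--corestriction through a splitting field of degree at most $w\hs d_T$, and realise $H$ as an extension of $W$ by torsion of $T$ of level $m=(w\hs d_T)^2$; your numerology is consistent with the stated bound. But the two steps that carry essentially all of the content are asserted rather than proved, and one of them, as written, would not quite work.

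The first is the descent of the section $s$, which you yourself flag as the crux. Your description of the obstruction is not correct as stated: the geometric splittings form a torsor under $Z^1(W,(T/T[m])(\kbar))$, which is \emph{not} finite (only the quotient $H^1(W,\cdot)$ is), and the obstruction to a $k$-rational section is a class in $H^1(k,\mathcal{Z})$ for the $k$-group of multiplicative type $\mathcal{Z}=Z^1(W,T/T[m])$, an extension of a finite group killed by $w$ by a quotient torus of $T/T[m]$ split by $L$. Showing that this class dies after enlarging the level by a controlled factor, with the factor small enough to land on $(w\hs d_T)^2$ rather than, say, $w^3d_T^2$ (which is what a naive d\'evissage of $H^1(k,\mathcal{Z})$ through its finite and toric parts yields), is precisely the content of Lucchini Arteche's proof; without it you have only a $\kbar$-subgroup, not a $k$-subgroup $H$. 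The second gap is in the verification of (SUR): comparing the fibres of $H^1(K,H)\to H^1(K,W)$ and $H^1(K,N)\to H^1(K,W)$ presupposes that every class lifting to $H^1(K,N)$ also lifts to $H^1(K,H)$. The obstruction to the latter lies in $H^2(K,{}_aT[m])$; its image in $H^2(K,{}_aT)$ vanishes, but the kernel of $H^2(K,{}_aT[m])\to H^2(K,{}_aT)$ is a copy of $H^1(K,{}_aT)$, which your Kummer step only shows to be $w\hs d_T$-torsion, not zero. Consequently the preimage of an \emph{arbitrary} section at level $m$ need not satisfy (SUR). The repair --- and the true source of the exponent $2r$ --- is to build $H$ in two layers: take a $k$-rational section $s$ of $N/T[m_1]\to W$ and let $H$ be the preimage in $N$ of $s(W)\cdot (T/T[m_1])[m_2]$ with $w\hs d_T\mid m_2$; then any cocycle in $N(\kbar)$ can be conjugated by an element of $T(\kbar)$ into $H(\kbar)$, because its $(T/T[m_1])$-component is a cocycle in a twisted torus whose $H^1$ is killed by $m_2$. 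You should either adopt this two-layer construction or supply the missing lifting argument; as it stands the proof is a correct skeleton with the load-bearing steps left open.
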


\begin{corollary}\label{c:LA}
Let $\kbar$ be an algebraically closed field of characteristic 0,
and let $\Gbar$ be a linear algebraic group  over $k$ with reductive identity component.
Let $k\subset \kbar$ be a subfield.
Then there exists a positive integer $\gamma(\Gbar)$ depending only on $\Gbar$
such that, for any $k$-form $G$ of $\Gbar$ there exists
a finite $k$-subgroup $H$ of $G$ of order at most  $\gamma(\Gbar)$ with
the property {\rm (SUR)} of Proposition \ref{p:LA15}.
\end{corollary}

\begin{proof}
In Proposition \ref{p:LA15} we  may take for $L$ the minimal splitting field of the maximal $k$-torus $T$.
Then the Galois group $\Gal(L/k)$ acts faithfully on the character group $\X^*(\overline{T})\simeq \Z^r$,
and hence,
\[d_T=[L:k]=\#\Gal(L/k)\le\beta(r),\]
cf.~Proposition \ref{p:minkowski}.
Clearly, $r$ and $w$ in Proposition \ref{p:LA15} depend only on $\Gbar$,
and we may take $\gamma(\Gbar)=w^{2r+1}\hs \beta(r)^{2r}$.
\end{proof}

\begin{subsec} \label{ss:pf-gen-0}
\emph{Reduction of Theorem \ref{t:cohom-bis} to the case when $\Gbar^0$ is reductive.}
Let $G$ be a $k$-form of $\Gbar$.
Consider the normal unipotent subgroup $R_u(G^0)\subset G$,
where $^0$ denotes the identity component and $R_u$ denotes the unipotent radical.
Write $G^\red=G/R_u(G^0)$.
Similarly, write $\Gbar^\red=\Gbar/R_u(\Gbar^0)$.
It is clear that  the identity component of $\Gbar^\red$ is reductive and that
 $G^\red$ is a $k$-form of $\Gbar^\red$.
Let $\pi\colon  G\to G^\red$ denote the canonical surjective homomorphism.
By Proposition \ref{l:Sansuc}, for any field extension $K/k$,
the canonical map
$\pi_*\colon H^1(K,G)\to H^1(K,G^\red)$ is bijective.
We see that for any $\xi\in H^1(k,G)$, an extension $K/k$ kills $\xi$ if and only if it kills $\pi_*(\xi)$.
We conclude that if Theorem  \ref{t:cohom-bis} is true for  $\Gbar^\red$  with bound $d(\Gbar^\red)$,
then this theorem is also true for  $\Gbar$ with the same bound $d(\Gbar)=d(\Gbar^\red)$.
\end{subsec}

\begin{subsec}\label{ss:pf-gen} \emph{Proof of Theorem \ref{t:cohom-bis}.}
By \ref{ss:pf-gen-0} we may assume that $\Gbar^0$ is reductive.
Let $H\subset G$ be as in Proposition \ref{p:LA15}  (due to Lucchini Arteche).
By Corollary \ref{c:LA}  we may assume that the order of $\Hbar$ is at most  $\gamma(\Gbar)$.
Let $\xi\in H^1(k,G)$.
From the property (SUR) of $H$,  we know  that $\xi$ is the image of some cohomology class $\xi_H\in H^1(k,H)$.
By Lemma \ref{l:finite-k}, the class $\xi_H$, and hence $\xi$,
can be killed by a finite field extension of degree at most $d(\Hbar)=|\Hbar|\le\gamma(\Gbar)$.
This completes the proof of Theorem  \ref{t:cohom-bis}  with the  bound $d(\Gbar)=\gamma(\Gbar^\red)$,
and thus completes the proofs of Theorems  \ref{t:cohom} and \ref{t:abs}.
\qed
\end{subsec}

\section{Real approximation for homogeneous spaces}
\label{s:real}

In this section we prove the following theorem, which was communicated to us by an anonymous referee.

\begin{theorem}\label{p:LA-cyclic}
Let $G$ be a connected  linear algebraic group over a number field $k$,
and consider the homogeneous space $Y=G/C$,
where $C\subset G$ is  a cyclic finite   $k$-subgroup.
Then $Y$ has the real approximation property, that is,
for any  set $S$ of {\emm archimedean} places of $k$,
the set $Y(k)$ is dense in $\prod_{v\in S} Y(k_v)$.
In particular, if $k\subset \R$ is a real number field, then  $Y(k)$ is dense in $Y(\R)$.
\end{theorem}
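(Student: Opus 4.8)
The plan is to reduce the real approximation property for $Y=G/C$ to a statement about Brauer--Manin obstructions and then exploit the cyclicity of $C$ to show that this obstruction vanishes at the archimedean places. First I would recall the standard criterion: for a homogeneous space $Y=G/C$ under a connected linear algebraic group $G$ with finite stabilizer, real approximation (equivalently, density of $Y(k)$ in $\prod_{v\in S}Y(k_v)$ for $S$ the set of archimedean places) is controlled by a subgroup of the Brauer group $\Br(Y)$. More precisely, by the general theory of the descent/Brauer--Manin obstruction for homogeneous spaces (in the style of Borovoi, and Colliot-Th\'el\`ene--Xu), the closure of $Y(k)$ in $\prod_v Y(k_v)$ is cut out by the Brauer--Manin pairing; to prove real approximation it suffices to show that every relevant Brauer class pairs trivially with the archimedean adelic points, so that no obstruction survives at the real and complex places.

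Next I would dispose of the connected part of $G$. Using the exact sequence $1\to C\to G\to Y\to 1$ and the fact that $G$ is connected, one knows that a connected linear algebraic group over a number field satisfies real approximation itself (this is classical: $G(k)$ is dense in $G(\R)$, indeed in $\prod_{v\in S}G(k_v)$). The point is therefore to understand how passing to the quotient by the finite cyclic subgroup $C$ can destroy this density, and to show it does not. I expect the key input to be that, because $C$ is \emph{cyclic}, its character group and the associated Galois modules are as simple as possible, forcing the relevant piece of $\Br(Y)/\Br(k)$ that governs the archimedean obstruction to be trivial or at least to pair trivially over $\R$ and $\C$.

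Concretely, the main computation I would carry out is the vanishing of the local obstruction at archimedean places. Over $\C$ there is nothing to prove since $H^i(\C,-)$ is trivial in positive degree; the whole difficulty concentrates at the real places. There I would use that $\Gal(\C/\R)=\Z/2$ has cohomological period $2$ and that for a cyclic group $C$ one has very explicit control of $H^1(\R,C)$ and of the cup-products feeding into the Brauer--Manin pairing. The cyclicity should guarantee that the image of the relevant $H^2$ (or the algebraic part of the Brauer group) in $H^2(\R,-)$ responsible for a possible obstruction vanishes, so that every archimedean adelic point is orthogonal to $\Br(Y)$ and hence lies in the closure of $Y(k)$.

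The hard part will be isolating exactly which subgroup of $\Br(Y)$ obstructs real approximation and proving it pairs trivially at the real places using only the hypothesis that $C$ is cyclic; for non-cyclic finite stabilizers the analogous statement can genuinely fail, so the argument must use cyclicity in an essential way (for instance through the fact that $H^2(\Gal(\C/\R),\widehat{C})$ and the associated period-index behavior are governed by a single cyclic factor). Once this orthogonality is established, invoking the descent exact sequence for $Y=G/C$ together with real approximation for the connected group $G$ yields that $Y(k)$ is dense in $\prod_{v\in S}Y(k_v)$, and in particular dense in $Y(\R)$ when $k\subset\R$, completing the proof.
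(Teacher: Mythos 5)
Your plan has a genuine gap at its foundation: the assertion that the closure of $Y(k)$ in $\prod_v Y(k_v)$ is cut out by the Brauer--Manin pairing for a homogeneous space $G/C$ with \emph{finite} stabilizer is not a theorem in general --- it is a well-known open problem (closely tied to the inverse Galois problem). It \emph{is} known for abelian stabilizers (Borovoi), and your $C$ is cyclic hence abelian, but you never invoke this; instead you propose to use cyclicity through a cohomological computation at the real places ($H^2(\Gal(\C/\R),\widehat{C})$, period-index behaviour) that you do not carry out and that you yourself flag as ``the hard part.'' As written, the argument consists of an unproven general principle plus an unperformed computation, so it does not constitute a proof. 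A further difficulty you would face even in the abelian-stabilizer framework is that for an arbitrary connected $G$ the algebraic Brauer group of $Y$ receives contributions from $\mathrm{Pic}(\overline{G})$ and the extension $1\to C\to G\to Y\to 1$, and controlling its archimedean evaluation directly is not routine.

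The paper's proof uses cyclicity in a completely different, purely group-theoretic way, and this is the idea your proposal is missing. One first takes a flasque (quasi-trivial) resolution $1\to Z'\to G'\xrightarrow{\ \nu\ } G\to 1$ with $G'$ quasi-trivial and $Z'$ central of multiplicative type (Colliot-Th\'el\`ene), and sets $H'=\nu^{-1}(C)$, so that $Y=G/C\simeq G'/H'$. Now $H'$ is a central extension of the cyclic group $C$ by $Z'$, and a central extension of a cyclic group is abelian --- this elementary fact is the only place cyclicity is used. The statement then follows from the known theorem that $G'/H'$ has real approximation whenever $G'$ is quasi-trivial and $H'$ is abelian (\cite{Borovoi}, Corollary 2.3), which is itself proved by showing the algebraic Brauer--Manin obstruction vanishes in that quasi-trivial setting. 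So the Brauer--Manin machinery you gesture at does appear, but only after the two reductions (quasi-trivial ambient group, abelian stabilizer) that make it actually work; without them your outline cannot be completed as stated.
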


Let  $k$ be a field of characteristic 0.
We refer to Colliot-Th\'el\`ene \cite{CT}, Definition 2.1 and Proposition 2.2, for the definition
of a {\em quasi-trivial $k$-group.}

\begin{lemma}[Colliot-Th\'el\`ene]
\label{l:CT}
Let $G$ be a connected  linear algebraic group over a field $k$ of characteristic 0.
Then there exists a short exact sequence of $k$-groups
     \[1\lra Z'\lra G'\labelto{\nu} G\lra 1\]
where $G'$ is a quasi-trivial  $k$-group and $Z'$ is
a $k$-group of multiplicative type contained in the center of $G'$.
\end{lemma}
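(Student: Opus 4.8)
The plan is to reduce to the case where $G$ is reductive and then give an explicit construction there, the only essential input being the existence of a quasi-trivial cover of a torus.

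First I would dispose of the unipotent radical. Since $\mathrm{char}\,k=0$, the group $U=R_u(G)$ is a connected normal unipotent $k$-subgroup and $G^\red=G/U$ is connected reductive. Granting the lemma for $G^\red$, say $1\to Z'\to H\xrightarrow{\psi} G^\red\to 1$ with $H$ quasi-trivial and $Z'$ central of multiplicative type, I would set $G'=G\times_{G^\red}H$, the fibre product over the projection $\pi\colon G\to G^\red$ and $\psi$. Then $G'$ is connected, the projection $\nu\colon G'\to G$ is surjective with kernel $\{1\}\times Z'\cong Z'$, and the other projection $G'\to H$ is surjective with kernel $U\times\{1\}\cong U$. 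Hence $R_u(G')=U$ and $(G')^\red\cong H$ is quasi-trivial, so $G'$ is quasi-trivial; moreover $\{1\}\times Z'$ is central in $H$, hence in $G\times H$, hence in $G'$, and is of multiplicative type. This reduces everything to the reductive case.

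For $G$ reductive I would use the almost-direct-product decomposition $G=G^{\rm der}\cdot Z$, where $Z=Z(G)^0$ is the connected centre (a $k$-torus) and $G^{\rm der}\cap Z$ is finite. Let $\rho\colon G^\scc\to G^{\rm der}\hookrightarrow G$ be the simply connected central cover of the derived group, defined over $k$ with finite central kernel of multiplicative type. Next I would cover $Z$ by a quasi-trivial torus: embedding the character module $\X^*(Z)$ into a permutation $\Gal(\kbar/k)$-module (for instance via $M\hookrightarrow \Z[\Gamma]\otimes_\Z M$, $\ m\mapsto\sum_{\gamma}\gamma\otimes\gamma^{-1}m$, where $\Gamma$ is a finite quotient of the Galois group through which the action factors) yields a surjection $q\colon Q\twoheadrightarrow Z$ from a quasi-trivial $k$-torus $Q$ with kernel of multiplicative type. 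I would then set $G'=G^\scc\times_k Q$ and define $\nu=\Phi\colon G'\to G$ by $\Phi(g,t)=\rho(g)\,q(t)$; this is a homomorphism because $q(t)\in Z$ is central, and it is surjective because $G=\rho(G^\scc)\cdot Z$.

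It remains to verify the two properties. The derived group of $G'=G^\scc\times Q$ is $G^\scc$, which is simply connected, and its toric quotient is $Q$, which is quasi-trivial; hence $G'$ is quasi-trivial. The kernel $Z'=\ker\Phi=\{(g,t):\rho(g)q(t)=1\}$ satisfies $\rho(g)=q(t)^{-1}\in G^{\rm der}\cap Z$, which is central in $G^{\rm der}$, whence $g\in Z(G^\scc)$; thus $Z'\subset Z(G^\scc)\times Q$, a central $k$-subgroup of multiplicative type, so $Z'$ is central of multiplicative type, as required. I expect the reductive construction to be the main obstacle: one must simultaneously make the derived group simply connected and the toric part quasi-trivial while keeping the kernel central and of multiplicative type. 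The device that makes this work is the direct product $G^\scc\times Q$ combined with the decomposition $G=G^{\rm der}\cdot Z$, and the only genuinely non-formal ingredient is the module-theoretic existence of the quasi-trivial cover $Q$ of the central torus $Z$.
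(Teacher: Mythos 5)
Your proposal is correct. Note that the paper itself does not prove this lemma: it simply cites Colliot-Th\'el\`ene, Proposition-D\'efinition 3.1 of \cite{CT}, and Proposition 2.8 of \cite{Borovoi}, and the construction you give is essentially the standard one underlying those references (a quasi-trivial, in fact flasque, resolution). Your two steps both check out: the fibre product $G\times_{G^{\red}}H$ does handle the unipotent radical, since quasi-triviality of a connected group depends only on $G^{\sss}$ being simply connected and $G^{\tor}$ being a quasi-trivial torus, both of which are insensitive to $R_u$; and in the reductive case the map $\Phi\colon G^{\scc}\times Q\to G$, $(g,t)\mapsto\rho(g)q(t)$, is a surjective $k$-homomorphism whose kernel lands in $Z(G^{\scc})\times Q$ because $\rho^{-1}\bigl(Z(G^{\mathrm{der}})\bigr)=Z(G^{\scc})$ for the central isogeny $\rho$. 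Two small points worth making explicit if you write this up: the cokernel of the embedding $\X^*(Z)\hookrightarrow\Z[\Gamma]\otimes_\Z\X^*(Z)$ may have torsion, so $\ker q$ is of multiplicative type but need not be a torus (which is all the lemma requires); and a closed subgroup of $Z(G^{\scc})\times Q$ is automatically of multiplicative type, which is what lets you conclude for $Z'=\ker\Phi$.
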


\begin{proof}
See  Colliot-Th\'el\`ene \cite{CT}, Proposition-Definition 3.1, or \cite{Borovoi}, Proposition 2.8.
\end{proof}

\begin{proposition}\label{p:qt-ab}
Let $G$ be a connected linear algebraic group over a field $k$ of characteristic 0,
and let $C\subset G$ be a cyclic finite $k$-subgroup.
Then there exists an isomorphism of $k$-varieties $G'/H'\isoto G/C$,
where $G'$ is a quasi-trivial  $k$-group and $H'\subset G'$ is an abelian $k$-subgroup.
\end{proposition}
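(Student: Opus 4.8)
The statement asks me to realize the homogeneous space $G/C$, where $C$ is a cyclic finite $k$-subgroup of a connected $k$-group $G$, as a quotient $G'/H'$ of a quasi-trivial $k$-group $G'$ by an \emph{abelian} subgroup $H'$. The natural strategy is to pass to the quasi-trivial cover furnished by Lemma \ref{l:CT} and pull $C$ back along it. Concretely, I would apply Lemma \ref{l:CT} to obtain a short exact sequence
\[1\lra Z'\lra G'\labelto{\nu} G\lra 1\]
with $G'$ quasi-trivial and $Z'$ of multiplicative type, central in $G'$. I would then set $H'=\nu^{-1}(C)\subset G'$, the preimage of $C$. Since $\nu$ is surjective with kernel $Z'$, the induced map of $k$-varieties $G'/H'\to G/C$ is an isomorphism: indeed $\nu$ identifies $G'/Z'$ with $G$, and taking the further quotient by $C$ on the target corresponds exactly to taking the quotient by $H'=\nu^{-1}(C)$ on the source. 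This gives the desired isomorphism $G'/H'\isoto G/C$ of $k$-varieties essentially for free.

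\emph{The main point: abelianness of $H'$.}
The substance of the proof is showing that $H'$ is abelian. By construction $H'$ sits in a central extension
\[1\lra Z'\lra H'\labelto{\nu} C\lra 1,\]
where $Z'$ is central in $G'$, hence a fortiori central in $H'$, and $C$ is cyclic. The key algebraic fact I would invoke is that a central extension of a \emph{cyclic} group by an abelian group is abelian. This is where cyclicity of $C$ is essential and cannot be weakened: over $\kbar$ (or after choosing a generator), any lift $h\in H'(\kbar)$ of a generator of $C$ together with the central subgroup $Z'$ generates $H'$, and since $Z'$ is central and $h$ commutes with itself, everything commutes. I would make this precise at the level of group schemes by checking that the commutator map $H'\times H'\to H'$ is trivial: commutators land in $Z'$ (because the extension is central and $C$ is abelian), and the commutator pairing $C\times C\to Z'$ factors through the alternating pairing induced on $C$, which vanishes because $C$ is cyclic (an alternating bi-additive pairing on a cyclic group is zero).

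\emph{The expected obstacle.}
The one step requiring care is justifying the commutator argument functorially, at the level of $k$-group schemes rather than merely on $\kbar$-points, so that $H'$ is abelian \emph{as a $k$-group}. Since $Z'$ is only of multiplicative type (not necessarily finite or split) and $H'$ need not be smooth in a naive sense over general $k$—though we are in characteristic $0$, so all group schemes are smooth—the cleanest route is to observe that $H'$ being abelian is a geometric condition: it suffices to verify commutativity after base change to $\kbar$, since the commutator morphism is defined over $k$ and a $k$-morphism that is trivial over $\kbar$ is trivial. Over $\kbar$ the finite group $C(\kbar)$ is genuinely cyclic and $Z'_{\kbar}$ is a diagonalizable central subgroup, so the elementary central-extension-of-cyclic argument applies directly. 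Thus I would reduce the abelianness to the geometric statement over $\kbar$, where it becomes the standard fact that any central extension of a finite cyclic group is abelian.
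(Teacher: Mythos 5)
Your proposal is correct and follows exactly the paper's own route: pull $C$ back along the quasi-trivial cover of Lemma \ref{l:CT}, set $H'=\nu^{-1}(C)$, identify $G'/H'$ with $G/C$, and conclude that $H'$ is abelian because it is a central extension of the cyclic group $C$ by $Z'$ (the paper's Lemma \ref{l:sH}). Your additional care about verifying commutativity at the level of $k$-group schemes by checking the commutator morphism over $\kbar$ is a harmless refinement of the same argument.
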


\begin{proof}
Consider the short exact sequence of Lemma \ref{l:CT},
and set $H'=\nu^{-1}(C)\subset G'$.  Clearly, we have $G/C=G'/H'$.
The $k$-subgroup $H'$ fits into a short exact sequence
\[ 1\to Z'\to H'\to C\to 1,\]
where $C$ is cyclic and $Z'$ is central in $H'$.
By Lemma \ref{l:sH} below, the group $H'$ is abelian.
\end{proof}

\begin{lemma}[well-known and very easy]
\label{l:sH}
Let an abstract group $\sH$ be a central extension of a cyclic group, that is,
we assume that $\sH$ fits into a short exact sequence
\[ 1\to A\to\sH\to C\to 1,\]
where $C$ is a cyclic group and $A$ is contained in the center of \hs$\sH$.
Then $\sH$ is abelian. \qed
\end{lemma}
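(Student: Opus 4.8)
The final statement is Lemma \ref{l:sH}: if an abstract group $\sH$ is a central extension of a cyclic group $C$ by a central subgroup $A$, then $\sH$ is abelian. The author has flagged this as ``well-known and very easy,'' so the plan is to give the short direct argument rather than invoke any machinery.

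The plan is to exploit a generator of the cyclic quotient. First I would choose an element $h\in\sH$ whose image in $C$ is a generator of $C$. The key observation is that every element of $\sH$ can be written in the form $a\hs h^m$ with $a\in A$ and $m\in\Z$: indeed, given any $x\in\sH$, its image in $C$ equals the image of $h^m$ for some integer $m$ (since $C=\langle \bar h\rangle$), so $x\hs h^{-m}$ lies in the kernel $A$ of the projection, giving $x=a\hs h^m$ with $a\in A$.

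With this normal form in hand, commutativity is immediate. Given two elements $x=a\hs h^m$ and $y=b\hs h^n$ with $a,b\in A$, I would compute $xy$ and $yx$ directly. Since $A$ is central in $\sH$, the elements $a$ and $b$ commute with each other and with every power of $h$; moreover powers of $h$ commute among themselves. Hence
\[
xy=a\hs h^m\hs b\hs h^n=a\hs b\hs h^{m+n}=b\hs a\hs h^{n+m}=b\hs h^n\hs a\hs h^m=yx,
\]
so $\sH$ is abelian.

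There is no real obstacle here: the entire content is the normal-form observation, which rests only on $C$ being cyclic (so that a single lift $h$ generates $\sH$ together with $A$) and on $A$ being central. I would only need to be mildly careful that $h$ generating $C$ modulo $A$ really does let every element be expressed through powers of that single $h$ --- this is exactly where cyclicity of $C$ is used, and it would fail for a noncyclic quotient. Since the statement is about abstract groups, no algebraic-geometry or scheme-theoretic subtleties intervene, and the proof is complete once the computation above is recorded.
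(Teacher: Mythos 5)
Your proof is correct and is exactly the standard argument the paper has in mind: the paper omits the proof entirely (the lemma is stated with an immediate \qed as ``well-known and very easy''), and your normal-form computation with a lift $h$ of a generator of $C$ supplies precisely the missing details. Nothing further is needed.
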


\begin{subsec} {\em Proof of Theorem \ref{p:LA-cyclic}.}
By Proposition \ref{p:qt-ab} we have $Y=G'/H'$,
where $G'$ is a quasi-trivial $k$-group and $H'\subset G'$ is an abelian $k$-subgroup.
By \cite{Borovoi}, Corollary 2.3, any $k$-variety of the form $G'/H'$,
where $G'$ is a quasi-trivial $k$-group and $H'\subset G'$ is an abelian $k$-subgroup,
has the real approximation property.
\qed
\end{subsec}

\section{Killing a first cohomology class over a real number field}
\label{s:cohom-R}

In this section we prove Theorem \ref{t:cohom-R}, which we state here in a more precise form.

\begin{theorem}\label{t:cohom-R-bis}
Let $G_\C$ be a linear algebraic group over the field of complex numbers $\C$.
Then there exists a natural number $d_\R=d_\R(G_\C)$ with the following  property:
\begin{property*}[$*_\R$]
For any real number field $k\subset \R$, any $k$-form $G$ of $G_\C$, and
any cohomology class $\xi\in H^1(k,G)$ becoming 1 over $\R$,
the class $\xi$ can be killed by a finite real extension $K/k$ of degree at most $d_\R$.
\end{property*}
\end{theorem}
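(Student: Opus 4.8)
The plan is to follow the strategy of the proof of Theorem~\ref{t:cohom-bis}, now keeping track of the real place. First I would reduce to the case where the identity component $G^0$ is reductive, exactly as in \ref{ss:pf-gen-0}: by Proposition~\ref{l:Sansuc} the map $H^1(K,G)\to H^1(K,G^{\red})$ is bijective for every field extension $K/k$, and this bijection commutes both with restriction to $\R$ and with passing to a real extension, so neither the hypothesis ``$\xi$ becomes $1$ over $\R$'' nor the desired conclusion is affected. Next I would invoke Corollary~\ref{c:LA} (Lucchini Arteche) to produce a finite $k$-subgroup $H\subset G$ with $|H|\le\gamma(G_\C)$ enjoying the property (SUR); in particular $\xi=\iota_*\xi_H$ for some $\xi_H\in H^1(k,H)$. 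The new difficulty, absent in the algebraically closed case, is that a lift $\xi_H$ need not become $1$ over $\R$, so Lemma~\ref{l:finite-k} cannot be applied directly: the torsor of $\xi_H$ is a finite $k$-scheme that may fail to have a real point.

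The way around this is a geometric reformulation. Let $Y$ be the $G$-torsor attached to $\xi$; since $\xi$ becomes $1$ over $\R$ we may fix $y_\infty\in Y(\R)$. Using (SUR), write $Y=P\times^{H}G$ for an $H$-torsor $P$, so that $P$ sits inside $Y$ as a finite closed subscheme and $Y\to Y/H=:X$ is a finite \'etale map onto a $k$-form $X$ of $G/H$, with image $\bar p_\infty\in X(\R)$ of $y_\infty$. Killing $\xi$ over a field $K\supseteq k$ amounts to producing a $K$-point $\bar p\in X(K)$ whose fibre $Y_{\bar p}$ (a finite $K$-scheme with at most $|H|$ geometric points) has a $K$-point. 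My plan is to choose $\bar p\in X(k)$ close, in the real topology, to $\bar p_\infty$: since $Y\to X$ is finite \'etale, the map $Y(\R)\to X(\R)$ is a local homeomorphism, so the set of real points of $X$ that lift to $Y(\R)$ is open; hence a sufficiently close $\bar p$ satisfies $Y_{\bar p}(\R)\ne\emptyset$. A finite $k$-scheme with a real point has a closed point whose residue field is a real extension $K/k$ of degree $\le|H|\le\gamma(G_\C)$, and this $K$ kills $\xi$. Thus everything reduces to the density of $X(k)$ in $X(\R)$ near $\bar p_\infty$, i.e.\ to a real approximation statement for $X$.

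The remaining, and hardest, point is to deduce this real approximation from Theorem~\ref{p:LA-cyclic}, which assumes that the acting group is \emph{connected} and the stabilizer is \emph{cyclic}, whereas here $G$ may be disconnected and $H$ non-cyclic. I would bridge the gap in two steps. To reduce to a connected group, I would trivialize the component-group part: the image of $\xi$ in $H^1(k,G/G^0)$ is a torsor under a finite group that becomes $1$ over $\R$, hence (being a finite $k$-scheme with a real point) is split by a real extension of degree $\le|G/G^0|$, after which $\xi$ comes from the connected group $G^0$. To reduce to a cyclic stabilizer, I would exploit that $\Gal(\C/\R)$ has order $2$, so the obstruction $\xi_{H,\R}$ is represented by a cocycle valued in the cyclic subgroup generated by a single involution, whence $\bar p_\infty$ already lies on the image of a homogeneous space $G/C$ with $C$ cyclic, to which Theorem~\ref{p:LA-cyclic} applies. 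Carrying out these two reductions over $k$ (not merely over $\R$), compatibly with the inner twisting by $\xi_H$ and with the degree bounds, is the delicate heart of the argument and the step I expect to be the main obstacle. Finally, collecting the extensions produced---one to split the component group and one from the real point of the fibre---yields a real extension whose degree is bounded solely in terms of $\gamma(G_\C)$ and $|G/G^0|$, hence solely in terms of $G_\C$; taking this bound as $d_\R$ establishes $(*_\R)$.
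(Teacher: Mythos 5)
Your overall architecture matches the paper's: reduce to reductive identity component via Sansuc, invoke Corollary~\ref{c:LA} to lift $\xi$ to a class $\xi_H\in H^1(k,H)$ for a finite subgroup $H$ of bounded order, observe that the real obstruction should be governed by a \emph{cyclic} subgroup because $\Gal(\C/\R)$ has order $2$, and finish with real approximation for a homogeneous space with cyclic stabilizer (Theorem~\ref{p:LA-cyclic}) plus the finite-scheme argument (Corollary~\ref{finite-real1finite}). You also correctly diagnose the two genuine difficulties: the lift $\xi_H$ need not become trivial over $\R$, and Theorem~\ref{p:LA-cyclic} requires a connected group and a cyclic stabilizer defined over a number field, not merely over $\R$.

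However, the step you yourself flag as ``the delicate heart of the argument'' is precisely where the proof is missing, and it does not go through as sketched. Your plan needs real approximation for $X=Y/H$, a twisted form of $G/H$ with $H$ finite but in general non-cyclic; no such statement is available (this is exactly why the paper cannot apply Theorem~\ref{p:LA-cyclic} directly), and your proposed repair---that the real cocycle is valued in the cyclic group generated by an involution---only produces a cyclic subgroup of $H_\R$, defined over $\R$, which cannot be fed into Theorem~\ref{p:LA-cyclic} over a number field. The paper's resolution, which is the one idea your proposal lacks, is to descend the cyclic group by first enlarging the base field in a controlled way: kill $\xi_H$ by an extension of degree $\le\gamma(G_\C)$ (Lemma~\ref{l:finite-k}), pass to its normal closure $L\subset\C$ of degree $\le\gamma(G_\C)!$, and set $K=L\cap\R$, so that $[L:K]\le 2$ by Lemma~\ref{l:degree-two}. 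Then inflation--restriction realizes $\Res_K(\xi_H)$ as the class of a cocycle $a\colon\Gal(L/K)\to H(L)$ with $a_s\cdot{}^sa_s=1$, and the cyclic subgroup $C=\langle a_s\rangle$ is $\Gal(L/K)$-stable, hence defined over $K$; now $Y=G_K/C$ satisfies the hypotheses of Theorem~\ref{p:LA-cyclic}, and a further real extension of degree $\le|C|$ (via Corollary~\ref{finite-real1finite} applied to the difference $\alpha-\delta(o)$ in the abelian group $H^1(K,C)$) finishes the job. Your reduction to connected $G$ is also slightly underspecified---the paper's Lemma~\ref{assume-conn} needs a twisting step costing a second factor of $|G_\C/G^0_\C|$, not just splitting the image in $H^1(k,G/G^0)$---but that is a minor point compared with the missing descent of $C$ to a number field.
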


In order to prove Theorem \ref{t:cohom-R-bis},
we shall need the following elementary lemma:

\begin{lemma}\label{l:degree-two}
Let $k\subset \R$ be a real number field and let $L\subset \C$
be a finite {\emm normal} field extension of $k$.
Set $K=L\cap \R$. Then $[L:K]\le 2$.
\end{lemma}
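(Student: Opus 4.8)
The plan is to prove the statement about the normal extension $L/k$ by analyzing how complex conjugation interacts with $L$. Since $k \subset \R$ and $L \subset \C$ is a finite normal (hence Galois, as we are in characteristic 0) extension of $k$, complex conjugation $\sigma \colon \C \to \C$ restricts to an automorphism of $L$ over $k$, provided $L$ is stable under $\sigma$. First I would verify this stability: because $L/k$ is normal and $k \subset \R$, the field $\sigma(L)$ is again a normal extension of $k$ inside $\C$ generated by the $\sigma$-images of the roots; since $\sigma$ permutes the conjugates of each generator, $\sigma(L) = L$. Thus $\sigma|_L$ is a well-defined element of $\Gal(L/k)$.

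Next I would identify the fixed field of $\sigma|_L$. By definition, $K = L \cap \R$ consists exactly of those elements of $L$ fixed by complex conjugation, so $K = L^{\langle \sigma|_L \rangle}$ is the fixed field of the subgroup of $\Gal(L/k)$ generated by $\sigma|_L$. By the fundamental theorem of Galois theory, $[L:K] = |\langle \sigma|_L \rangle|$, the order of $\sigma|_L$ as an element of $\Gal(L/K)$.

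The key observation is then that $\sigma$ has order at most $2$: complex conjugation is an involution, $\sigma^2 = \mathrm{id}$ on $\C$, so its restriction $\sigma|_L$ satisfies $(\sigma|_L)^2 = \mathrm{id}$ as well. Hence the cyclic group $\langle \sigma|_L \rangle$ has order $1$ or $2$, giving $[L:K] = |\langle \sigma|_L \rangle| \le 2$, which is exactly the claim. The two cases correspond precisely to whether $L \subset \R$ (in which case $\sigma|_L$ is trivial, $K = L$, and $[L:K]=1$) or $L \not\subset \R$ (in which case $[L:K] = 2$).

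I do not anticipate a serious obstacle here; the statement is genuinely elementary, as the ``well-known and very easy'' labeling style of nearby lemmas suggests. The only point requiring a moment's care is the normality hypothesis, which is used precisely to guarantee that complex conjugation preserves $L$ so that $\sigma|_L \in \Gal(L/k)$ makes sense; without normality the restriction need not land in $L$, and the Galois-theoretic computation of $[L:K]$ would not apply. Everything else is the standard fact that complex conjugation is an involution together with the Galois correspondence.
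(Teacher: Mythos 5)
Your proof is correct. Both you and the paper begin the same way: normality of $L/k$ together with $k\subset\R$ forces complex conjugation $\sigma$ to stabilize $L$, so $\sigma|_L$ is an automorphism of $L$ over $k$. Where you diverge is in how you convert this into the bound on $[L:K]$. You observe that $K=L\cap\R$ is exactly the fixed field of the involution $\sigma|_L$ and then invoke the Galois correspondence (really Artin's lemma: for a finite group $H$ of automorphisms of $L$ one has $[L:L^H]=|H|$), giving $[L:K]=|\langle\sigma|_L\rangle|\le 2$ in one line. The paper instead argues by hand: it takes $x\in L\smallsetminus K$, forms $p=(x+\sigma(x))/2$ and $q=x\sigma(x)$ in $K$, notes that $x$ satisfies the quadratic $x^2-2px+q=0$ with negative discriminant, writes $x=p+\zeta$ with $\zeta^2=D<0$, and then checks that any two elements of $L\smallsetminus K$ generate the \emph{same} quadratic extension $K(\zeta)$, concluding $L=K(\zeta)$. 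Your route is shorter and more conceptual; the paper's is more elementary in that it never appeals to the degree--order dictionary of Galois theory, only to explicit quadratic equations. Both are complete, and your identification of exactly where normality is used (to ensure $\sigma(L)=L$) matches the paper's.
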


\begin{proof}
If $L=K$, there is nothing to prove, so we assume that  $L\neq K$.
Write $\sigma\colon \C\to\C$ for the complex conjugation. Then $\sigma(k)=k$, because $k\subset\R$.
Since $L$ is a normal extension of $k$, we see that $\sigma(L)=L$.
Let $x\in L\smallsetminus K$, and set $p=(x+\sigma(x))/2$, $q=x\cdot \sigma(x)$.
Then $p,q\in K$ and $x^2-2px+q=0$.
Set $D=p^2-q\in K\subset \R$.
 Then $x=p+\zeta$, where $\zeta^2=D$.
Since
$x\in  L\smallsetminus K\ \subset\  \C\smallsetminus \R$, we have $D<0$.
Clearly,  $K(x)=K(\zeta)$.
If $x'\in  L\smallsetminus K$ is another element,
then similarly $x'\in K(\zeta')$, where $(\zeta')^2=D'<0$.
Then $(\zeta'/\zeta)^2=D'/D>0$, hence $\zeta'/\zeta\in L\cap \R=K$, and therefore, $x'\in K(\zeta)$.
Thus $L=K(\zeta)$ and $[L:K]=2$.
\end{proof}

We shall need a lemma and a corollary:

\begin{lemma}\label{finitebdd}
Let $X$ be a {\emm finite} variety over a real number field $k\subset \R$.
Let $x\in X(\R)$ be an $\R$-point.
Then $x$ is defined over  a real number field $k_x\subset \R$
whose degree over $k$ is at most $|X_\C|$,
where we write $|X_\C|$ for $|X(\C)|$.
\end{lemma}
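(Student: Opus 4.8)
The plan is to analyze the $\Gal(\C/k)$-orbit of the point $x$ and bound its size. Since $X$ is a finite $k$-variety, $X_\C = X \times_k \C$ consists of finitely many $\C$-points, with $|X_\C| = |X(\C)|$, and these points are permuted by the absolute Galois group $\Gal(\kbar/k)$ (equivalently $\Gal(\C/k)$, since $\kbar \subset \C$ and $X$ is finite). The field of definition $k_x$ of $x$ is the fixed field of the stabilizer of $x$ in $\Gal(\C/k)$, and $[k_x:k]$ equals the cardinality of the Galois orbit of $x$, which is certainly at most the total number of $\C$-points, namely $|X_\C|$. So the degree bound is immediate from the orbit-stabilizer principle; the only genuine content is the \emph{reality} of $k_x$.

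The subtle point, which I expect to be the main obstacle, is ensuring that $k_x$ can be realized as a subfield of $\R$, i.e.\ that $x$ is defined over a \emph{real} number field rather than merely a number field of the stated degree. Here is where the hypothesis $x \in X(\R)$ is used. First I would take $k_x$ to be the residue field of the point $x$, viewed as a closed point of the scheme $X$; concretely, if $X = \Spec A$ with $A$ a finite-dimensional $k$-algebra, then $x$ corresponds to a $k$-algebra homomorphism $A \to \C$, and $k_x$ is the image of $A$ in $\C$ under this map. Since $x \in X(\R)$, this homomorphism factors through $\R$, so its image $k_x$ lies in $\R$. Thus $k_x \subset \R$ is automatically a real number field, and $[k_x:k] \le |X_\C|$ as above.

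Concretely, I would argue as follows. The $\R$-point $x$ is a $k$-morphism $\Spec \R \to X$, which picks out a closed point of $X$ whose residue field $\kappa(x)$ embeds into $\R$; set $k_x = \kappa(x)$ under this embedding, so $k \subset k_x \subset \R$. The degree $[k_x:k] = \dim_k \kappa(x)$ is bounded by $\dim_k A = |X(\C)| = |X_\C|$, since the residue field at any point of $\Spec A$ is a quotient of a localization of $A$ and hence has $k$-dimension at most that of $A$. Finally, $x$ is visibly defined over $k_x$, because the chosen morphism $\Spec \R \to X$ factors through $\Spec k_x$. This completes the proof.

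The one place to take mild care is the identification $|X_\C| = \dim_k A$: over the algebraically closed field $\C$ of characteristic $0$, the finite $\C$-algebra $A \otimes_k \C$ is reduced (as $A$ is étale over $k$, char $0$), hence a product of $|X(\C)|$ copies of $\C$, giving $\dim_\C(A \otimes_k \C) = |X(\C)|$ and therefore $\dim_k A = |X(\C)| = |X_\C|$. I would note this briefly but not belabor it, since the paper's convention that $X$ is finite in characteristic $0$ makes étaleness automatic.
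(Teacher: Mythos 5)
Your proposal is correct and follows essentially the same route as the paper: the paper's proof is precisely your first paragraph's orbit--stabilizer argument on the finite set $X(\kbar)=X(\C)$, taking $k_x$ to be the fixed field of the stabilizer of $x$ in $\Gal(\kbar/k)$. Your residue-field computation just makes explicit the one point the paper leaves implicit, namely why $k_x\subset\R$ (in the paper's formulation: complex conjugation fixes the real point $x$, so $k_x$ lies in the fixed field $\kbar\cap\R$).
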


\begin{proof}
Since $X$ is finite, we have $X(\C)=X(\kbar)$,
where $\kbar=\Qbar$ denotes the algebraic closure of $k$ in $\C$.
The Galois group  $\Gal(\kbar/k)$ acts on $X(\kbar)$.
The point $x$ is defined over the fixed field $k_x\subset \R$
of the stabilizer of $x$ in $\Gal(\kbar/k)$.
Since the cardinality of the orbit of $x$ under $\Gal(\kbar/k)$ is at most $|X(\kbar)|=|X_\C|$,
we see that  $[k_x:k]\leq |X_\C|$.
\end{proof}

\begin{corollary}\label{finite-real1finite}
Let $G_\C$ be a finite group over $\C$. Then $G_\C$ satisfies $(*_\R)$ with $d_\R=|G_\C|$.
\end{corollary}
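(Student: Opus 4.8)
The plan is to mimic the proof of Lemma \ref{l:finite-k}, replacing the crude Galois-orbit estimate used there by Lemma \ref{finitebdd}, which is designed precisely to produce a \emph{real} number field of controlled degree. So let $k\subset\R$ be a real number field, let $G$ be a $k$-form of the finite group $G_\C$ (hence $G$ is itself a finite $k$-group with $|G(\C)|=|G_\C|$), and let $\xi\in H^1(k,G)$ be a class that becomes $1$ over $\R$. As in Lemma \ref{l:finite-k}, I would attach to $\xi$ its torsor $\sT=\sT_\xi$, a principal homogeneous space of $G$. Since $G$ is finite, $\sT$ is a finite $k$-variety, and because a torsor has the same cardinality as its structure group, $|\sT_\C|=|\sT(\C)|=|G(\C)|=|G_\C|$.

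Next I would invoke the standard dictionary between torsors and cohomology classes: a class in $H^1$ is trivialized by a field extension if and only if the corresponding torsor acquires a rational point over that extension. In particular, the hypothesis that $\xi$ becomes $1$ over $\R$ says exactly that $\sT(\R)\neq\emptyset$, so I may fix a point $x\in\sT(\R)$.

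Finally I would apply Lemma \ref{finitebdd} to the finite $k$-variety $X=\sT$ and to the $\R$-point $x$. It produces a real number field $K=k_x$ with $k\subset K\subset\R$ and $[K:k]\le|\sT_\C|=|G_\C|$, over which $x$ is already defined. Then $\sT(K)\neq\emptyset$, so the image of $\xi$ in $H^1(K,G)$ is trivial; that is, the finite real extension $K/k$ kills $\xi$. This establishes $(*_\R)$ with $d_\R=|G_\C|$.

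I do not expect a genuine obstacle here, since this is exactly the role for which Lemma \ref{finitebdd} was prepared: in contrast to the general situation of Lemma \ref{l:finite-k}, where one only controls the degree of a splitting field, Lemma \ref{finitebdd} lets one choose a splitting \emph{point} lying in $\R$ and bounds the degree of the field it generates by the number of $\C$-points of the torsor. The only points to verify are the two routine identifications used above: that $\sT$ is indeed a finite $k$-variety with $|\sT_\C|=|G_\C|$, and that triviality of $\xi$ over $\R$ is equivalent to $\sT(\R)\neq\emptyset$.
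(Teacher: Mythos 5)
Your argument is correct and coincides with the paper's own proof: both pass to the torsor $\sT_\xi$, use that triviality over $\R$ gives a point $t\in\sT(\R)$, and apply Lemma \ref{finitebdd} to that point to obtain a real extension of degree at most $|\sT_\C|=|G_\C|$ killing $\xi$. No issues.
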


\begin{proof}
Let $k\subset\R$ be a real number field, let $G$ be a $k$-form of $G_\C$,
and let $\xi\in H^1(k,G)$ be a cohomology class becoming 1 over $\R$.
Then $\xi$ defines a $k$-torsor $\sT=\sT_\xi$ of $G$ such that $\sT(\R)$ is nonempty.
Let $t\in \sT(\R)$.
By Lemma \ref{finitebdd} the torsor $\sT$ has a $k_t$-point
over a real number field $k_t\subset \R$
whose degree over $k$ is at most $|\sT_\C|=|G_\C|$.
The extension $k_t/k$ kills $\xi$, as required.
\end{proof}

Now we reduce Theorem \ref{t:cohom-R-bis} to the case when $G_\C$ is connected.

\begin{lemma}\label{assume-conn}
Assume that Theorem \ref{t:cohom-R-bis} is true for connected  linear algebraic groups,
and let $G_\C$ be any linear algebraic group over $\C$ (not necessarily connected).
Then $G_\C$ satisfies $(*_\R)$ with
\begin{align*}
d_\R=d_\R(G^0_\C)\cdot |G_\C/G^0_\C|^2.
\end{align*}
\end{lemma}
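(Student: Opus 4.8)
The plan is to reduce the general (disconnected) case to the connected case, for which Theorem \ref{t:cohom-R-bis} is assumed, by using the exact sequence
\[
1\lra G^0\lra G\lra \pi_0\lra 1,\qquad \pi_0:=G/G^0,
\]
and chasing the associated twisted cohomology sequence. The idea is that a class $\xi\in H^1(k,G)$ becoming trivial over $\R$ has two ``layers'': its image in $H^1(k,\pi_0)$, which is finite component data controlled by Corollary \ref{finite-real1finite}, and, after a suitable twist, a class coming from the connected part $G^0$, controlled by the hypothesis. Each layer should be killable over a real extension whose degree is bounded only in terms of $G_\C$, and the bounds should multiply.

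\emph{First} I would pass to the quotient map $q\colon G\to \pi_0$ and consider $q_*(\xi)\in H^1(k,\pi_0)$. Since $\xi$ becomes $1$ over $\R$, so does $q_*(\xi)$; as $\pi_0$ is a finite $k$-group with $|(\pi_0)_\C|=|G_\C/G^0_\C|$, Corollary \ref{finite-real1finite} provides a real extension $K_1/k$, of degree at most $|G_\C/G^0_\C|$, killing $q_*(\xi)$. \emph{Next}, after base change to $K_1$ the image of $\xi$ in $H^1(K_1,\pi_0)$ is trivial, so by the exactness of the pointed cohomology sequence the restricted class $\xi_{K_1}\in H^1(K_1,G)$ lies in the image of $H^1(K_1,\hs_cG^0)\to H^1(K_1,G)$ for an appropriate inner twist $\hs_cG^0$ of $G^0$ by a cocycle $c$ representing the component data (this is the standard twisting description of the fibers of $q_*$; see Serre \cite{SerreCG}, I.5.5). \emph{Then} I would apply the connected case of the theorem to the twisted group $\hs_cG^0$: note $(\hs_cG^0)_\C\cong G^0_\C$ because the twist is by a finite cocycle and becomes inner over $\C$, so the bound $d_\R(G^0_\C)$ applies uniformly, and since $\xi_{K_1}$ is still trivial over $\R$ (real places of $K_1$ lie over the real place of $k$), its preimage in $H^1(K_1,\hs_cG^0)$ may be taken to be trivial over $\R$ as well. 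This yields a further real extension $K_2/K_1$ of degree at most $d_\R(G^0_\C)$ killing $\xi_{K_1}$, and hence $\xi$.

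\emph{The main obstacle} is bookkeeping the twist and keeping the bound uniform. Two points need care: that the preimage class in the twisted-$G^0$ cohomology can be chosen to be trivial over $\R$ (so the connected hypothesis is genuinely applicable to a class that dies over $\R$), and that the degree contribution from the component part is $|G_\C/G^0_\C|^2$ rather than $|G_\C/G^0_\C|$. The squared factor arises because choosing the twisting cocycle $c$ over $k$ itself requires realizing the component-level class over a real field—one factor of $|G_\C/G^0_\C|$ to make $q_*(\xi)$ trivial (so that $\xi_{K_1}$ comes from the connected part at all), and a second factor to descend the twisting torsor so that $\hs_cG^0$ and the lift are defined over a real field of controlled degree before invoking the connected case. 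The final bound is then the product
\[
d_\R=d_\R(G^0_\C)\cdot |G_\C/G^0_\C|^2,
\]
since $[K_2:k]=[K_2:K_1]\cdot[K_1:k]$ and the extensions are chosen to be real at every stage.
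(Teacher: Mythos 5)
Your overall architecture matches the paper's: kill the image of $\xi$ in $H^1(k,G/G^0)$ using Corollary \ref{finite-real1finite} (one factor of $|G_\C/G^0_\C|$), lift $\xi$ to the connected part, arrange for the lift to be trivial over $\R$ (a second factor), and then invoke the connected case (the factor $d_\R(G^0_\C)$). But the one step that actually requires an argument --- ``its preimage in $H^1(K_1,\hs_cG^0)$ may be taken to be trivial over $\R$ as well'' --- is asserted rather than proved, and your explanation of where the second factor of $|G_\C/G^0_\C|$ comes from (``descending the twisting torsor so that $\hs_cG^0$ and the lift are defined over a real field'') misidentifies the mechanism. Note first that once $q_*(\xi)$ has been killed over $K_1$, exactness of the pointed cohomology sequence places $\xi_{K_1}$ in the image of the \emph{untwisted} $H^1(K_1,G^0)$; no inner twist of $G^0$ is needed and there is no twisting torsor to descend. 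The genuine issue is that an arbitrary lift $\xi_0\in H^1(K_1,G^0)$ of $\xi_{K_1}$ need not be trivial over $\R$: triviality of $\xi_{K_1}$ over $\R$ only tells you, by exactness at $H^1(\R,G^0)$, that $\loc_\R(\xi_0)=\delta_\R(g)$ for some $g\in (G/G^0)(\R)$, where $\delta$ is the connecting map.

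The missing argument, and the true source of the second factor of $|G_\C/G^0_\C|$, is the rationalization of this point $g$: by Lemma \ref{finitebdd}, $g$ is defined over a real extension of $K_1$ of degree at most $|G_\C/G^0_\C|$, and after passing to that extension one replaces $\xi_0$ by $\xi_0*g^{-1}$ using the action of $(G/G^0)(k)$ on $H^1(k,G^0)$ from Serre \cite{SerreCG}, I.5.5, Proposition 39(ii); this new lift still maps to $\xi$ and is now trivial over $\R$, so the connected case applies to it. Without this twisted-action step your chain of extensions does not produce a class in $H^1(\cdot\hs,G^0)$ that both maps to $\xi$ and dies over $\R$, which is exactly what the hypothesis on $G^0$ requires. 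Filling this in yields the paper's proof; as written, the crucial middle step is a gap.
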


\begin{proof}
Let $k\subset\RR$ be a real number field, let $G$ be a $k$-form of $G_\C$,
and let $\xi\in H^1(k,G)$ be a cohomology class becoming 1 over $\R$.
From the short exact sequence
     \[1\longrightarrow G^0\longrightarrow G\longrightarrow G\slash G^0\longrightarrow 1\]
 we obtain a commutative diagram (of pointed sets) with exact rows:
\[
\xymatrix{
(G/G^0)(k) \ar[r]^\delta \ar[d]        &H^1(k,G^0)\ar[r]^\varphi \ar[d]^{\loc_\R}
       &H^1(k,G)\ar[r]^\psi\ar[d]^{\loc_\R} &H^1(k,G/G^0)\ar[d]^{\loc_\R}  \\
(G/G^0)(\RR) \ar[r]^{\delta_\R}                   &H^1(\RR,G^0)\ar[r]^{\varphi_\R}
       &H^1(\RR,G)\ar[r]^{\psi_\R}   &H^1(\R,G/G^0).
}
\]

By Lemma \ref{finite-real1finite}, after possibly replacing $k$
by a real extension of degree at most $|G_\C/G^0_\C|$, we may and shall assume that $\psi(\xi)=1$,
and hence, $\xi=\varphi(\xi_0)$ for some $\xi_0\in H^1(k, G^0)$.
By assumption, the class $(\xi_0)_\R:=\loc_\R(\xi_0)\in H^1(\RR,G^0)$ maps to  $1\in H^1(\RR,G)$.
It follows that   $(\xi_0)_\R=\delta_\R(g)$ for some $g\in (G\slash G^0)(\RR)$.
By Lemma \ref{finitebdd}, after possibly replacing $k$ by a real extension of degree at most $|G_\C/G^0_\C|$,
we may and shall assume that $g\in (G/G^0)(k)$.
By Serre \cite{SerreCG}, I.5.5, Proposition 39(ii), the class $\xi'_0:=\xi_0*g^{-1}\in H^1(k, G^0)$
maps to $\xi$ under $\varphi$ and becomes 1 over $\RR$.
By assumption, $\xi'_0$ can be  killed by a real extension $K/k$ of degree at most $d_\R(G^0_\C)$.
Clearly, $K$ also kills $\xi$, which completes the proof.
\end{proof}

\begin{subsec}
Next we reduce Theorem \ref{t:cohom-R-bis} to the case when $G_\C$ is reductive.
Let $G_\C$ be a connected linear $\C$-group.
We write $R_u(G_\C)$ for the unipotent radical of $G_\C$\hs,
and we write $G_\C^\red=G_\C/R_u(G_\C)$, which is a connected reductive $\C$-group.
Assume that Theorem \ref{t:cohom-R-bis} is true for $G_\C^\red$.
An argument similar to that of Subsection \ref{ss:pf-gen-0} shows
that then Theorem \ref{t:cohom-R-bis} is also true for $G_\C$ with the same bound
\[ d_\R(G_\C)=d_\R(G^\red_\C).\]
\end{subsec}

\begin{subsec} \emph{Proof of  Theorem \ref{t:cohom-R-bis}.}
We have seen that it suffices to prove the theorem for a connected reductive  $\C$-group.
Let $G_\C$ be such a  $\C$-group, and let $G$ be a $k$-form of $G_\C$.
Let $\xi_G\in H^1(k,G)$ be a cohomology class such that $\loc_\R(\xi_G)=1$.
By Corollary \ref{c:LA}, there exists a \emph{finite} $k$-subgroup $H\subset G$
of order at most $\gamma(G_\C)$ such that $\xi_G$
is the image of some cohomology class $\xi_H\in H^1(k,H)$.
Then by Lemma \ref{l:finite-k}, the class $\xi_H$
can be killed by a finite field extension $M/k$ in $\C$
(not necessarily real, not necessarily normal) of degree at most $|H|\le\gamma(G_\C)$.
Then $\xi_H$ will be killed by the normal closure $L$ of $M$ over $k$
(which is contained in $\C$ and normal over $k$)
of degree at most $\gamma(G_\C)!$ over $k$.
Note that $L$ might not be real.

Set $K=L\cap \R$. Then
\[[K:k]\le [L:k]\le \gamma(G_\C)!\hs.\]
 If $K=L$, then $K/k$ is a real field extension
of degree at most $\gamma(G_\C)!$ \hs killing $\xi_H$ and $\xi_G$\hs, as required.
Otherwise, by Lemma \ref{l:degree-two} we have  $[L:K]=2$.
Set $\Gamma=\Gal(L/K)$. Then $\Gamma=\{1,s\}$, where $s^2=1$.
Consider the restriction $\xi_H^{(K)}=\Res_K(\xi_H)\in H^1(K,H)$.
Since the Galois extension $L/K$ kills $\xi_H^{(K)}$,
by the nonabelian inflation-restriction exact sequence (see Serre \cite{SerreCG}, I.5.8(a)\hs),
the class $\xi_H^{(K)}$ comes from some class $\eta^{(L/K)}\in H^1(L/K, H(L)\hs)$.
Let $a=a^{(L/K)}\colon \Gamma\to H(L)$ be a cocycle representing $\eta^{(L/K)}$.
Then $a_1=1_H$ and $a_s\cdot \hs^s a_s=1_H$.
Let $C$ denote the cyclic subgroup of $H_L$ generated by $a_s$.
Since $^s a_s=a_s^{-1}\in C(L)$, we see that the $L$-subgroup $C$ of $H$
is $\Gamma$-stable, and hence, is defined over $K$.
We regard $a^{(L/K)}$ as an element of $Z^1(L/K,C(L)\hs)$,
and we write $\alpha^{(L/K)}$ for the class of $a^{(L/K)}$ in $H^1(L/K,C(L)\hs)$.
Furthermore, we write $\alpha$ for the image of $\alpha^{(L/K)}$ in $H^1(K,C)$.
Then the image of $\alpha$ in $H^1(K,H)$ is $\xi^{(K)}_H$ and hence,
the image of $\alpha$ in $H^1(K,G)$ is $\Res_K(\xi_G)$.
It follows that the image of $\alpha$ in $H^1(\R,G)$ is 1.
We wish to kill the image $\Res_K(\xi_G)$  of  $\alpha$ in $H^1(K,G)$ by a real extension of bounded degree.

 We consider the homogeneous space  $Y\!:=G_K/C$
and the commutative diagram of pointed sets
\begin{equation*}\label{e:S11}
\xymatrix{
1\ar[r] &Y(K)/G_K(K)\ar[r]^\delta\ar[d]^{\loc_\R} &H^1(K, C)\ar[r]^{i_*}\ar[d]^{\loc_\R} &H^1(K,G_K)\ar[d]^{\loc_\R}\\
1\ar[r] &Y(\R)/G(\R)\ar[r]^{\delta_\R}  &H^1(\R, C)\ar[r]^{i_*}       &H^1(\R,G)\hs,
}
\end{equation*}
where $G_K(K)=G(K)$ and $H^1(K,G_K)=H^1(K,G)$.
By Serre \cite{SerreCG}, I.5.5, Corollary 1 of Proposition 36, the rows of this diagram are exact.
Recall that $\alpha\in H^1(K,C)$.
Consider the localization $\loc_\R(\alpha)\in H^1(\R,C)$.
Then $i_*(\loc_\R(\alpha)\hs)=1\in H^1(\R,G)$,
and hence, $\loc_\R(\alpha)=\delta_\R(o_\R)$ for some orbit $o_\R$ of $G(\R)$ in $Y(\R)$.
Since $G_K$ is connected and $C$ is cyclic, by Theorem \ref{p:LA-cyclic}
the set of $K$-points $Y(K)$ is dense in $Y(\R)$.
Since $o_\R$ is open in $Y(\R)$, we see that $o_\R$ contains a $K$-rational point $y\in Y(K)\cap o_\R$.
Let
\[o=\  G(K)\cdot y\  \in Y(K)/G(K)\]
denote the $G(K)$-orbit of $y$ in $Y(K)$.
Then $\loc_\R(o)=o_\R$ and hence,
\[\loc_\R(\delta(o))=\delta_\R(o_\R)=\loc_\R(\alpha)\in H^1(\R,C).\]

The $K$-group $C$ is cyclic, hence abelian, and therefore,
$H^1(K,C)$ is naturally an abelian group, which we shall write additively.
 Consider $\alpha-\delta(o)\in H^1(K,C)$.
Then
\[\loc_\R(\alpha-\delta(o))=0\in H^1(\R,C).\]
By Corollary \ref{finite-real1finite},  the cohomology class $\alpha-\delta(o)$
can be killed by a real extension $K'$ of $K$ of degree at most $|C|\le|H|\le\gamma(G_\C)$.
This means that over  $K'$,
if we write $\alpha'=\Res_{K'}(\alpha)\in H^1(K',C)$, then  we have $\alpha'=\delta(o')\in H^1(K',C)$
for the $G(K')$-orbit $o'=\,G(K')\cdot y\ \in\, Y(K')/G(K')$.
It follows that
\[i_*(\alpha')=1\in H^1(K',G).\]
Thus we have killed $\Res_K(\xi_G)=i_*(\alpha)$  by a real extension $K'$ of $K$ of degree at most $\gamma(G_\C)$,
and we have killed $\xi_G$ by a real extension $K'$ of $k$ of degree at most $\gamma(G_\C)!\cdot\gamma(G_\C)$.
This completes the proof of Theorem   \ref{t:cohom-R-bis} with the bound
$d_\R(G_\C)=|G_\C/G_\C^0|^2\cdot \gamma(\hs(G^0_\C)^\red)!\cdot\gamma(\hs(G^0_\C)^\red)$,
and thus completes the proofs of Theorems \ref{t:cohom-R} and \ref{t:abs-R} and of Corollary \ref{finite-real}.
\qed
\end{subsec}

\end{document}